\numberwithin{equation}{section}
\newcommand{\Z}{\mathbb{Z}}
\newcommand{\Q}{\mathbb{Q}}
\newcommand{\F}{\mathbb{F}}
\newcommand{\cB}{\mathcal{B}}
\newcommand{\cT}{\mathcal{T}}
\newcommand{\bSig}{\mathbf\Sigma}
\newcommand{\Sb}{\bSig_\bullet}
\newcommand{\Ib}{\cI_\bullet}
\newcommand{\sbb}{\sigma_\bullet}
\renewcommand{\bar}{\overline}
\newcommand{\sL}{\mathscr{L}}
\newcommand{\cC}{\mathcal{C}}
\newcommand{\cI}{\mathcal{I}}
\newcommand{\bi}{\mathbf{i}}
\newcommand{\bt}{\mathbf{t}}
\newcommand{\pceil}[1]{\lceil #1 \rceil}
\DeclareMathOperator{\Gal}{Gal}
\DeclareMathOperator{\Tr}{Tr}
\DeclareMathOperator{\id}{id}
\DeclareMathOperator{\Mat}{Mat}
\DeclareMathOperator{\sgn}{sgn}
\DeclareMathOperator{\NP}{NP}
\DeclareMathOperator{\Supp}{Supp}
\newcommand{\aff}{\mathrm{aff}}
\theoremstyle{plain}
\newtheorem{theorem}{Theorem}[section]
\newtheorem{corollary}[theorem]{Corollary}
\newtheorem{lemma}[theorem]{Lemma}
\newtheorem{keylemma}[theorem]{Key Lemma}
\newtheorem{proposition}[theorem]{Proposition}
\theoremstyle{definition}
\newtheorem{definition}[theorem]{Definition}
\newtheorem{example}[theorem]{Example}
\newtheorem{remark}[theorem]{Remark}
\newtheorem*{remark*}{Remark}
\newtheorem*{example*}{Example}
\newtheorem{acknowledgments}{Acknowledgments}
\title[Curves With Controlled First Slope]
{Construction of Curves with
a Controlled First Slope using $p$-Symmetric Numbers}
\author{Robert Moore, Hui June Zhu}
\date{\today}
\address{
Robert Moore,
Department of mathematics,
State University of New York at Buffalo,
Buffalo, NY 14260. 
The United States.
}
\email{rcmoore@buffalo.edu}
\address{
Hui June Zhu,
Department of mathematics,
State University of New York at Buffalo,
Buffalo, NY 14260. 
The United States.
}
\email{hjzhu@math.buffalo.edu}
\keywords{
$p$-adic weight; Newton polygon slopes; Newton slopes;
Zeta functions; Artin-Schreier curves, $L$-functions of exponential sums; 
$p$-symmetric numbers, non-supersingular curves, divisibility of exponential sums, Chevalley-Warning-Ax-Katz bound. 
}
\subjclass[2020]{Primary 11T23; Secondary 11G20, 14G15, 14H25.}
\begin{document}

\begin{abstract}
This paper establishes a constructive link between the first slope of the
Artin-Schreier curve $X_f: y^p-y=f(x)$ along with its length and 
the $p$-adic weight of the support of $f(x)$.
If the maximal $p$-adic weight element $\nu$ in $\Supp(f)$ is unique,
we show that the first slope's lower bound of $1/s_p(\nu)$
is achieved if and only if 
$\nu$ satisfies an elementary combinatorial $p$-adic condition, which we define as
{\em $p$-symmetry}. In this case, we give the length of
the first slope segment explicitly.
As an application, we construct explicit families of 
curves in every characteristic $p$ with 
first slope equal to $1/n$ for every $n\ge 2$. 
\end{abstract}

\maketitle

\section{Introduction}

A curve in this paper is a smooth projective and 
geometrically integral algebraic variety of dimension $1$.
Let $p$ be a rational prime. 
Let $q$ be a $p$-power and let $v_q(\alpha)$ 
be the additive $q$-adic valuation 
of $\alpha\in\bar\Q_p$ with $v_q(q)=1$.  
Let $X$ be a curve over $\F_q$ of genus $g$.
Write $Z(X/\F_q,s)$ for the zeta function of $X$. 
Then by Weil (\cite{We48}), $Z(X/\F_q,s)(1-s)(1-qs)=\prod_{i=1}^{2g}(1-\alpha_i s)$ 
where $\alpha_i$ are Weil $q$-numbers.
We shall order these reciprocal zeros such that
$v_q(\alpha_1)\le v_q(\alpha_2)\le \dots \le v_q(\alpha_{2g})$.
It is known that a generic curve of genus $g$ over $\F_q$ 
is {\em ordinary}, where
$v_q(\alpha_1)=\dots=v_q(\alpha_g)=0$ and 
$v_q(\alpha_{g+1})=\dots=v_q(\alpha_{2g})=1$.
On the other hand, a curve $X$ is {\em supersingular}
if and only if  $v_q(\alpha_1)=\dots = v_q(\alpha_{2g})=\frac{1}{2}$. 
The Jacobian variety of a supersingular curve 
is isogenous over $\bar\F_q$ to a product of $g$ supersingular elliptic curves.
The {\bf Newton polygon} of $X$ is the piecewise linear function 
on the real plane on the interval $[0,2g]$ with slopes 
$v_q(\alpha_1),\ldots,v_q(\alpha_{2g})$, each of multiplicity $1$,
denoted by $\NP(X)$. 
Each Newton polygon is 
a lower convex hull with only integral vertices.
Every possible Newton polygon of $X$ lies between the supersingular 
and the ordinary ones. 
The first slope of the Newton polygon of $X$ (called simply the {\bf first
slope} of $X$) has been studied by
Ax \cite{Ax64}, Katz \cite{Ka71}, Moreno-Moreno \cite{MM95}, Adolphson-Sperber 
\cite{AS87}, and Wan \cite{Wan95}.    

Much is known about  the Newton polygon stratification of the moduli space of 
principally polarized abelian varieties (see \cite{Oo91b}). 
By contrast, it is still open in many cases 
whether a given Newton polygon can be realized by the Jacobian of a curve; 
see \cite{Oo91a}.
There has been a focus on the supersingular strata in the past decades:
Li and Oort \cite{LO98} proved that the supersingular locus has dimension 
$[\frac{g^2}{4}]$ in the moduli of abelian varieties. 
On the other hand, the Torelli locus (which parameterizes Jacobians of curves) 
is of dimension  $3g-3$ (for $g>1$).
It is not clear whether the supersingular locus and the Torelli locus 
have a non-empty intersection.
van der Geer and van der Vlugt (\cite{GV92}, \cite{GV95})
explicitly constructed supersingular curves of 
every genus $g$ when $p=2$, built exclusively upon
Artin-Schreier curves.  
For more recent progress on the existence of supersingular curves 
over fields of arbitrary odd characteristic, see \cite{KHS20} for genus 4 for example.

By comparison, the Torelli locus 
of non-supersingular and non-ordinary curves remains poorly understood, and 
very little is known or even conjectured.
Zarhin (see \cite{Zar04}) identified a sufficient criterion
for non-supersingularity of hyperelliptic curves in characteristic $p>2$,
but it remains an open question whether,
for instance,
one can construct a family of curves 
with a prescribed non-supersingular, non-ordinary Newton polygon.

The main object of study in this paper will be Artin-Schreier curves
\[
    X_f : y^p - y = f
\]
for some $f = \sum_{i} a_i x^i\in\F_q[x]$
of degree $d\ge 3$. 
For the rest of the paper, we assume $p\nmid i$ for all $i\in \Supp(f)$.
Notice that for an arbitrary polynomial $h$ one can 
replace it by $h-h(0)$ without altering the Newton polygon
since $X_h$ and $X_{h-h(0)}$ share the same one. 
If there are nonzero multiples of $p$ in $\Supp(h)$, 
one can find a polynomial $f$ satisfying $p\nmid i$ for all $i\in \Supp(f)$, 
where $X_f$ is isomorphic to $X_h$, hence $X_f$ and $X_h$ have the 
same Newton polygon.

For a nonnegative integer $N$, let $s_p(N)$ denote the
$p$-adic weight of $N$, i.e., the sum of $N$'s $p$-adic digits.
It is well-known that the first slope of
$X_f$ is bounded below by $\frac{1}{\max_{i \in \Supp(f)}s_p(i)}$
(see a proof in \cite[Theorem 4.1]{Wan95}). 
In \cite{MSCK04}, the authors describe an open condition on the coefficients
of $f$ which, when achieved, guarantees that the first slope
of $X_f$ is equal to their lower bound.
In \cite{Bl12} Blache introduces the $p$-density of $f$, 
which can be used to compute the first slope.    
However, neither the $p$-density nor this open condition is easy to compute
for a family of $X_f$ with fixed support set for $f$. 

In this paper, we identify a class of sets $S$
where the lower bound of $\frac{1}{\max_{i \in \Supp(f)}s_p(i)}$
is achieved for every $f(x)$ with $\Supp(f) = S$.
In fact, if $\Supp(f)$ contains a unique element $\nu$ of maximal $p$-adic
weight, then we are able to completely characterize when the lower bound of
$\frac{1}{\max_{i \in \Supp(f)}s_p(i)}$ is achieved.
We do this
by establishing a sufficient condition on elements in $\Supp(f)$,
which we define as $p$-symmetry.
A positive integer $\nu$ coprime to $p$ is 
{\bf $p$-symmetric} if 
there exists a $3$-tuple of positive integers $(k,\ell,w)$
with $\ell < p^k$ such that 
$\nu w= (p^k-1)\ell$ where the multiplication $\nu w$ in $p$-adic digit form is
carry-free (see Section \ref{S:p-weight}).
For example, $5$-symmetric numbers of precisely $3$ digits are
(exhaustive in $5$-adic form):
\begin{align*}
&(111)_5, (222)_5,
(444)_5, (101)_5, (202)_5, (404)_5,
(121)_5, (242)_5, (143)_5, (341)_5, (301)_5, \\
&(103)_5, (112)_5,  (211)_5,
\end{align*}
which counts $14\%$ of all 3-digit numbers
coprime to $5$.
In decimal representation and in increasing order, they are 
$26,28,31,32,36,48,52,56,62,72,76,96,104,124.$
See Section \ref{S:4.1}
for an exploration of $p$-symmetric numbers.

\begin{theorem}
\label{T:main-thm-tight}
Let $f=\sum_{1\le i\le d, p\nmid i}a_i x^i \in\F_q[x]$.        
Then the first slope of the curve
    $X_f : y^p - y = f$ is $\ge \frac{1}{\max_{i\in\Supp(f)} s_p(i)}$.
\begin{enumerate}
\item     Suppose $\Supp(f)$ contains a unique element $\nu$ such that
    \[
        s_p(\nu)=\max_{i\in\Supp(f)}s_p(i).
    \]
    Then the first slope is equal to $\frac{1}{s_p(\nu)}$ 
    if and only if $\nu$ is a $p$-symmetric number.
\item If $\nu=p^k-1$ is the unique maximal $p$-adic weight
    element in $\Supp(f)$, then 
the first slope is equal to $\frac{1}{k(p-1)}$ and has multiplicity $k(p-1)$.
\end{enumerate}
\end{theorem}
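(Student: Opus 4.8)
The plan is to pass from the curve $X_f$ to the $L$-functions of the associated exponential sums, control those via Dwork's $p$-adic theory, and finally translate the resulting $p$-adic condition into the arithmetic of $p$-adic digits. Fix a nontrivial additive character $\psi$ of $\F_p$, put $S_m(g)=\sum_{x\in\F_{q^m}}\psi(\Tr_{\F_{q^m}/\F_p}g(x))$ and $L(g,s)=\exp(\sum_{m\ge1}S_m(g)s^m/m)$. The starting point is the factorization $Z(X_f/\F_q,s)\,(1-s)(1-qs)=\prod_{t=1}^{p-1}L(tf,s)$, so $\NP(X_f)$ is the union, as a multiset of slopes, of the Newton polygons of the $L(tf,\cdot)$. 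The lower bound ``first slope $\ge 1/\max_{i\in\Supp(f)}s_p(i)$'' holds for each $L(tf,\cdot)$ and is \cite[Theorem 4.1]{Wan95}; I would record a proof of it from the Dwork estimates below. Everything then reduces to identifying the bottom edge of $\NP(X_f)$.

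The heart of the argument is a \textbf{Key Lemma}: if $\Supp(f)$ contains a unique element $\nu$ of maximal $p$-weight, then the portion of $\NP(X_f)$ of slope at most $1/s_p(\nu)$ --- in particular the first slope and its multiplicity --- agrees with that of $\NP(X_{x^\nu})$. To prove it I would set up Dwork's trace formula: with $\pi^{p-1}=-p$, Dwork's splitting function $\theta(t)=\exp(\pi(t-t^p))$, and $\Theta_f(x)=\prod_{i\in\Supp(f)}\theta(\hat a_i x^i)$ (with $\hat a_i$ the Teichm\"uller lift of $a_i$), each $L(tf,\cdot)$ is computed by the nuclear operator $\alpha_t=$ (multiplication by $\Theta_{tf}$) followed by the Dwork contraction $\sum c_n x^n\mapsto\sum c_{qn}x^n$ on a suitable $p$-adic Banach space, whose matrix in the monomial basis has $(m,n)$-entry equal to the $(qm-n)$-th coefficient of $\Theta_{tf}$. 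The $\pi$-adic estimates for the coefficients of $\theta$, and hence of $\Theta_{tf}$, are governed by $p$-adic digit sums (the basic bounds and the exact-equality condition are the digit arithmetic of Section~\ref{S:p-weight}); the point is that such a coefficient sits at the minimal possible $\pi$-adic level only if it is ``built'' purely from the factor $\theta(\hat a_\nu x^\nu)$, and uniqueness of $\nu$ is exactly what forces every other factor $\theta(\hat a_i x^i)$ to raise the level strictly. Isolating the slope-$\le 1/s_p(\nu)$ part of $\alpha_t$ cleanly --- e.g.\ through the degree filtration of the Banach space and the Hodge--Newton type decomposition it induces --- and keeping track of multiplicities is the main obstacle and the technical core of the paper.

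Granting the Key Lemma, it remains to compute the bottom of $\NP(X_{x^\nu})$ explicitly, which is now a Gauss sum computation. Base change to $\F_{q'}$, $q'=p^a$, with $\nu\mid q'-1$, and let $\chi$ be a multiplicative character of $\F_{q'}^\times$ of order $\nu$. Substituting $x\mapsto x/t$ in $S_m(tx^\nu)$, expanding over the fibers of $x\mapsto x^\nu$, and using Hasse--Davenport gives $L(tx^\nu,s)=\prod_{j=1}^{\nu-1}\bigl(1+\epsilon_{j,t}\,g(\chi^j)\,s\bigr)$, where $g$ is the Gauss sum over $\F_{q'}$ and each $\epsilon_{j,t}=\bar\chi^j(t)$ is a root of unity of order prime to $p$; thus the reciprocal roots of $L(tx^\nu,\cdot)$ are Gauss sums up to such roots of unity, so all the $L(tx^\nu,\cdot)$ share one Newton polygon, and in $\NP(X_{x^\nu})$ every slope occurs with $(p-1)$ times its multiplicity in $\NP(L(x^\nu,\cdot))$. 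By Stickelberger (Gross--Koblitz), $v_{q'}(g(\chi^j))=s_p(r_j)/(a(p-1))$ where $\chi^j=\omega^{-r_j}$ and $r_j=j(p^a-1)/\nu$. Hence the first slope of $X_{x^\nu}$ equals $\tfrac{1}{a(p-1)}\min_{1\le j\le\nu-1}s_p\!\bigl(j(p^a-1)/\nu\bigr)$, with multiplicity $(p-1)\cdot\#\{\,1\le j\le\nu-1:\ s_p(j(p^a-1)/\nu)\text{ attains that minimum}\,\}$.

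It remains to do the number theory, the content of Sections~\ref{S:p-weight} and~\ref{S:4.1}. First, $s_p((p^a-1)\ell)=a(p-1)$ for $1\le\ell\le p^a$: write $(p^a-1)\ell=(\ell-1)p^a+(p^a-\ell)$ and note $(\ell-1)+(p^a-\ell)=p^a-1$ has no carries. Second, $s_p(AB)\le s_p(A)s_p(B)$, with equality if and only if the schoolbook product of $A$ and $B$ is carry-free. Applying the second fact to $\nu\cdot\bigl(j(p^a-1)/\nu\bigr)=(p^a-1)j$ and the first to the right side yields $s_p\bigl(j(p^a-1)/\nu\bigr)\ge a(p-1)/s_p(\nu)$ for every $j$; so the first slope of $X_{x^\nu}$ --- equivalently, by the Key Lemma, of $X_f$ --- is $\ge 1/s_p(\nu)$, and it equals $1/s_p(\nu)$ precisely when some $j\in\{1,\dots,\nu-1\}$ makes $(p^a-1)j=\nu\cdot\bigl(j(p^a-1)/\nu\bigr)$ a carry-free product, i.e.\ precisely when $(a,j,j(p^a-1)/\nu)$ witnesses that $\nu$ is $p$-symmetric. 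One last check reconciles the constraint ``$\nu\mid p^a-1$'' with the definition of $p$-symmetry: given a witness $(k,\ell,w)$, replacing $k$ by a common multiple $k'$ of $k$ and $\ord_\nu(p)$ and $w$ by $w(1+p^k+p^{2k}+\dots+p^{k'-k})$, keeping $\ell$, produces an admissible witness whose product is still carry-free; conversely an admissible witness is a witness. This gives part (1). For part (2), $\nu=p^k-1$ has $s_p(\nu)=k(p-1)$ and is $p$-symmetric via $(k,1,1)$, so the first slope is $\tfrac{1}{k(p-1)}$; taking $a=k$, the minimizing set is $\{\,1\le j\le p^k-2:\ s_p(j)=1\,\}=\{1,p,\dots,p^{k-1}\}$, so the first slope of $\NP(X_{x^\nu})$, hence of $\NP(X_f)$, has multiplicity $k(p-1)$.
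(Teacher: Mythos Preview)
Your route is genuinely different from the paper's. You reduce to the monomial curve $X_{x^\nu}$ via your ``Key Lemma'' and then settle $\NP(X_{x^\nu})$ by Stickelberger; the paper never passes to $X_{x^\nu}$ but instead analyzes the Dwork characteristic series $C^\triangle(f,s)=\sum C_m s^m$ for $f$ directly, recasting the valuation of each $G_N$ as a $p$-adic change-making problem (Key Lemma~\ref{L:key}) and controlling the first slope and its multiplicity through a purely combinatorial ``minimizer'' formalism (Section~\ref{S:4.3}, Proposition~\ref{P:bound}). Your Gauss-sum endgame is clean and makes the equivalence with $p$-symmetry transparent; the paper's route avoids Gauss sums entirely but yields the multiplicity bounds $(k-e)(p-1)\le t_1\le\nu(p-1)$ of Theorem~\ref{T:main!} as an immediate byproduct of the minimizer height, something you would have to extract separately from the Stickelberger side.

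The genuine gap is the proof of your Key Lemma. The observation that the minimal-valuation term of $G_N$ is built from the $\nu$-factor alone is correct, but promoting this to ``the first slope \emph{and its multiplicity} of $\NP(X_f)$ coincide with those of $\NP(X_{x^\nu})$'' needs exactly the work the paper does: one must know that when the bound $M_\cC(N)=s_p(N)/s_p(\nu)$ is met the minimizing representation is \emph{unique} and uses only $\nu$-coins (Key Lemma~\ref{L:key}(2)), and then that in the expansion $C_m=\sum_{(\sigma,\cI)}\pm\prod_\ell G_{q\ell-\sigma\ell}$ there is at most one $(\sigma,\cI)\in\bSig_m$ achieving the minimum (Propositions~\ref{P:hereditary}--\ref{P:bounded_min}), so no cancellation occurs. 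Your appeal to a ``Hodge--Newton type decomposition'' does not substitute for this argument, and carrying it out is essentially the paper's Sections~\ref{S:change-making}--\ref{S:estimate}. A smaller point: in the ``if'' direction of (1) you need the witness $\ell$ to land in $\{1,\dots,\nu-1\}$, i.e.\ $\ell<\nu$; this holds for the \emph{minimal} factorization by Proposition~\ref{P:k-e}, but you should say so and either prove it or start from a minimal witness.
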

To highlight the effectiveness of Theorem \ref{T:main-thm-tight},
we will examine some of its applications.

Firstly, suppose $f$ is of degree $p^k-1$ for some $k\ge 1$. 
Then $p^k-1$ must be the unique maximal $p$-adic weight element in $\Supp(f)$. 
Moreover, $p^k - 1$ is $p$-symmetric, with $(k,\ell,w) = (k,1,1)$.
By Theorem \ref{T:main-thm-tight},
the first slope of $X_f$ is equal to $\frac{1}{k(p-1)}$
and has multiplicity $k(p-1)$.
Therefore, we see that the first slope of $X_f$ and its multiplicity 
are completely determined when $f$ is of degree $p^k - 1$.
We actually give an estimate of the multiplicity for many general cases in 
Theorem \ref{T:main!}.

Our next application of Theorem \ref{T:main-thm-tight} is 
for the case when $p > d$, where $d = \deg(f)$.
In this case, $d$ is the element in $\Supp(f)$
of maximal $p$-adic weight.
If $p \equiv 1 \bmod{d}$,
then $d$ is clearly $p$-symmetric.
In fact, since $d<p$, we can show 
$d$ is $p$-symmetric if and only if $p\equiv 1\bmod d$
(see a proof of this in Example \ref{ex:p-symmetry}(4)). 
Theorem \ref{T:main-thm-tight} then implies that the first slope is
$1/d$ precisely when this occurs.
This is already known (see \cite{AS89}\cite{Wan93}\cite{SZ03}),
however our result provides a more satisfying
explanation for this fact.

Theorem \ref{T:main-thm-tight} actually allows for a strong generalization
of this result.
One immediate extension stems from the fact that $d$ need not be the
element in $\Supp(f)$ of maximal $p$-adic weight.
If $\nu \in \Supp(f)$ is the unique element of maximal $p$-adic weight
and $\nu < p$, then the first slope of $X_f$ is equal to $1/\nu$
if and only if $p \equiv 1 \bmod{\nu}$.
This is true
regardless of the degree of $f$.
This generalization extends further in the following corollary:

\begin{corollary}
    \label{C:spd-family-1}
   Suppose $\Supp(f)$ has a unique maximal $p$-adic weight element $\nu$. 
If $p\equiv 1\bmod s_p(\nu)$, 
then the first slope of $X_f$ is equal to $1/s_p(\nu)$.
\end{corollary}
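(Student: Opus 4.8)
The plan is to deduce the corollary from Theorem \ref{T:main-thm-tight}(1) by producing an explicit witness of $p$-symmetry for $\nu$. Since $\Supp(f)$ has a unique element of maximal $p$-adic weight, namely $\nu$, Theorem \ref{T:main-thm-tight}(1) tells us that the first slope of $X_f$ equals $1/s_p(\nu)$ if and only if $\nu$ is $p$-symmetric. The hypothesis $p\equiv 1\bmod s_p(\nu)$ is exactly the assertion that $s:=s_p(\nu)$ divides $p-1$, so the corollary reduces to the purely combinatorial statement: \emph{if $s_p(\nu)\mid p-1$, then $\nu$ is $p$-symmetric.}

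To prove this I would argue as follows. Write $\nu=\sum_{i=0}^{n-1}d_i p^i$ in base $p$ with $d_{n-1}\neq 0$, so $\sum_i d_i=s$ and $d_i\le s$ for all $i$, and set $t:=(p-1)/s$, a positive integer. First record the elementary congruence $\nu\equiv s_p(\nu)=s\pmod{p-1}$, valid because $p\equiv 1\pmod{p-1}$; since $s\mid p-1$ this forces $s\mid\nu$, so $\ell:=\nu/s$ is a positive integer with $1\le\ell\le\nu<p^{n}$. Then take the tuple
\[
(k,\ell,w):=\Bigl(\,n,\ \tfrac{\nu}{s},\ t\cdot\tfrac{p^{n}-1}{p-1}\,\Bigr),
\]
and note that $w=t(1+p+\cdots+p^{n-1})$ has base-$p$ expansion with all $n$ digits equal to $t$, which is a legitimate digit string since $t\le p-1<p$. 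A one-line computation gives $\nu w=\nu\cdot t\cdot\tfrac{p^{n}-1}{p-1}=\tfrac{t\nu}{p-1}\,(p^{n}-1)=\ell\,(p^{n}-1)=(p^{k}-1)\ell$, which is the identity required by the definition, and $\ell<p^{k}$ holds by the bound above.

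The one point needing care is that the multiplication $\nu w$ must be carry-free in the sense of Section \ref{S:p-weight}. Since every $p$-adic digit of $w$ equals $t$, collecting the digitwise products of $\nu$ and $w$ by powers of $p$ shows that the $j$-th column of the product is $t\sum_{i=a}^{b}d_i$, a sum over a consecutive window $a\le i\le b$ of the digit string of $\nu$; this is at most $t\sum_{i=0}^{n-1}d_i=ts=p-1<p$. Hence every column stays below $p$, no carries occur, the multiplication is carry-free, and so $\nu$ is $p$-symmetric. Combined with the first paragraph, this proves the corollary.

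The main obstacle I anticipate is not the algebra but hitting on the correct witness: the factor $t=(p-1)/s$ is exactly what rescales the digits $d_i$ of $\nu$ so that the column-sum estimate becomes the sharp equality $ts=p-1$, while the multiplier $\tfrac{p^{n}-1}{p-1}$ is what makes $\nu w$ a multiple of $p^{n}-1$. One must also take care to match the column-sum computation against the precise notion of ``carry-free'' introduced in Section \ref{S:p-weight}. As a consistency check, when $\nu<p$ (so $n=1$ and $s=\nu$, and the hypothesis reads $p\equiv 1\bmod\nu$) the tuple collapses to $(k,\ell,w)=(1,1,(p-1)/\nu)$, recovering the witness for the statement discussed just after Theorem \ref{T:main-thm-tight}.
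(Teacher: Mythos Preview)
Your proof is correct and follows essentially the same approach as the paper's own argument (given as Corollary~\ref{C:spd-family}): both construct the witness $w = t(1+p+\cdots+p^{k-1})$ with $t=(p-1)/s_p(\nu)$, identify $\ell=\nu/s_p(\nu)$, and verify carry-freeness via the observation that each column of the product is $t$ times a partial sum of the digits of $\nu$, hence bounded by $ts_p(\nu)=p-1$. Your presentation is in fact slightly more direct, since you fix $k=n$ (the digit-length of $\nu$) from the outset and deduce $s\mid\nu$ immediately from the congruence $\nu\equiv s_p(\nu)\pmod{p-1}$, whereas the paper first argues that $(p-1)\mid \nu t$ and leaves $k$ unspecified subject to $p^k>\ell$.
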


Proving this corollary requires showing that
$p \equiv 1 \bmod{s_p(\nu)}$ implies that $\nu$
is $p$-symmetric.
This is done in Section \ref{S:proof},
Corollary \ref{C:spd-family}.

A third application of Theorem \ref{T:main-thm-tight} involves building on the
supersingular curves due to van der Geer and van der Vlugt
(see \cite{GV92}), whose construction used a coding theory--inspired method.
The van der Geer--van der Vlugt supersingular curves are fiber products of
Artin-Schreier curves 
of the form $X : y^p - y = x\sum_{i=0}^k a_i x^{p^i}$.

\begin{corollary}
    \label{C:ss-van-der-Geer}
    If $c_\nu\ne 0$ for some $p$-symmetric number $\nu>1$
    and $\nu \ne p^i+1$ for any $i$, then any curve of the form
$X: y^p-y = c_\nu x^\nu + x\sum_{i=0}^{k}a_i x^{p^i}$ has first slope equal to
$1/s_p(\nu)$ and is non-supersingular.
\end{corollary}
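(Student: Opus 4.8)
The plan is to deduce the statement directly from Theorem \ref{T:main-thm-tight}(1), after checking that the exponent $\nu$ is the \emph{unique} maximal $p$-adic weight element of $\Supp(f)$, where $f = c_\nu x^\nu + x\sum_{i=0}^{k}a_i x^{p^i} = c_\nu x^\nu + \sum_{i=0}^k a_i x^{p^i+1}$. Once that is established, $p$-symmetry of $\nu$ forces the first slope to equal $1/s_p(\nu)$, and a lower bound $s_p(\nu)\ge 3$ immediately yields non-supersingularity.

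First I would pin down $s_p(\nu)$. Since $\nu$ is $p$-symmetric it is coprime to $p$, and by hypothesis $\nu>1$; hence $s_p(\nu)\ge 2$. If $s_p(\nu)=2$ then $\nu = p^a + p^b$ for some $a>b\ge 0$ or $\nu = 2p^c$; coprimality to $p$ rules out the case $c\ge 1$, while $2p^0 = 2 = p^0+1$ and $\nu = p^a+p^b$ with $b\ge 1$ are likewise excluded, leaving only $\nu = p^a+1$. This contradicts the hypothesis $\nu\ne p^i+1$, so $s_p(\nu)\ge 3$. Next I would observe that every other exponent occurring in $\Supp(f)$ has $p$-adic weight at most $2$: for $i\ge 1$ the number $p^i+1$ has exactly two nonzero base-$p$ digits, both equal to $1$, so $s_p(p^i+1)=2$, and the exponent $p^0+1 = 2$ has $p$-adic weight at most $2$ as well (if $p=2$ this exponent is divisible by $p$ and is first cleared by the standard Artin--Schreier substitution $y\mapsto \tilde y + h$, which keeps $X_f$ in the same isomorphism class and does not raise the $p$-adic weight of any exponent). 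Because $\nu\ne p^i+1$ for all $i$, the monomial $c_\nu x^\nu$ cannot cancel, so $\nu\in\Supp(f)$ with weight $s_p(\nu)\ge 3$ strictly larger than that of every other support element; in particular $\nu$ is the unique maximal $p$-adic weight element. (Note $\deg f\ge\nu\ge 3$, so the standing hypothesis $d\ge 3$ holds and $X_f$ has positive genus.)

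Now Theorem \ref{T:main-thm-tight}(1) applies verbatim: $\Supp(f)$ has a unique maximal $p$-adic weight element $\nu$, and $\nu$ is $p$-symmetric, so the first slope of $X_f$ equals $1/s_p(\nu)$. Since $s_p(\nu)\ge 3$ we get first slope $\le 1/3 < 1/2$, so the Newton polygon of $X_f$ is not the segment of constant slope $1/2$; hence $X_f$ is non-supersingular. The only point requiring care is the bookkeeping in the middle step: one must confirm that excluding $\nu = p^i+1$ is precisely what prevents a tie in $p$-adic weight with the $x^{p^i+1}$ terms, and that the at most one $p$-divisible exponent (arising only when $p=2$) can be removed without affecting the top-weight part of the support — but this is routine, and it is the only obstacle.
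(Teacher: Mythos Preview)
Your proof is correct and follows essentially the same route as the paper's: verify $s_p(\nu)\ge 3$, check that every other exponent $p^i+1$ has $p$-adic weight at most $2$, conclude that $\nu$ is the unique maximal-weight element, and apply Theorem~\ref{T:main-thm-tight}. You are in fact more careful than the paper on edge cases---the $p=2,\ i=0$ exponent, the non-cancellation of $c_\nu x^\nu$, and the check that $\deg f\ge 3$---none of which the paper spells out.
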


\begin{proof}
    The $p$-symmetry of $\nu$ implies that it is coprime to $p$,
    and the fact that $\nu > 1$ and $\nu \ne p^i + 1$ for any $i$
    implies that $s_p(\nu) > 2$.
    Since $s_p(p^i + 1) = 2$ for all $i \ge 0$,
    we see that $\Supp(f)$ has $\nu$ as a unique element of maximal $p$-adic weight.
    Since it is also $p$-symmetric, 
    we have by Theorem \ref{T:main-thm-tight}
    that the first slope of $X$ is $1/s_p(\nu)$.
\end{proof}

\begin{remark*} 
In a special case of Theorem \ref{T:main-thm-tight}(2) 
where the polynomial $f$ is of degree $p^k-1$ (in other words, 
$p^k-1$ is the maximal integer in $\Supp(f)$), 
the first slope of $X_f$ and its multiplicity are established by Section 3.1 of \cite{Bl21}.
In this special instance, our Proposition \ref{P:pk-1-case} singles out precisely the set $\{1,p,\ldots,p^{k-1}\}$, which corresponds to Blache's minimal support. 
By comparison, Blache's minimal support records the relevant vertices of minimal solutions, while our minimizer (in Definition \ref{D:minimizer}) records the same vertices together with permutation/cycle data between them. 
\end{remark*}

\vspace{5mm}

\noindent{\bf Organization of the paper:}
We prove or recall some $p$-adic weight properties in Section \ref{S:p-weight}, 
and a key lemma regarding solutions to a $p$-adic change-making 
problem in Section \ref{S:change-making}.
Section \ref{S:symmetric} introduces $p$-symmetric numbers 
and minimizers, and proves a number of important combinatorial lemmas. 
These are applied to the
$p$-adic estimate of the characteristic power series 
of the Dwork operator in Section \ref{S:estimate}.
Our main theorems
and their applications are obtained in Section \ref{S:proof}.
In particular, our main result of
Theorem \ref{T:main-thm-tight}
follows from Theorem \ref{T:main!},
and with it we construct several families of curves
with a given first slope.
Many non-supersingular curves emerge among these families.

\section{\texorpdfstring{$p$}{p}-adic weight under addition and multiplication}
\label{S:p-weight}
The $p$-adic weight of a nonnegative integer $N$,
which we henceforth denote as $s_p(N)$,
gives the sum of the $p$-adic digits of $N$.
That is, if $N = \sum_{i=0}^\ell n_i p^i$, $n_i \in \{0,\dots,p-1\}$,
then
$s_p(N)\coloneqq \sum_{i=0}^\ell n_i$.
Frequently in this paper, for specific $N$, we will write
$N=(n_\ell\; \cdots\; n_1 \; n_0)_p$
for the {\bf $p$-adic (digit) form} of $N$,
and we call $n_i$ the {\bf $i$-th $p$-adic digit of $N$}.

Addition of two nonnegative integers $N$ and $M$ can be done
$p$-adically, where the arithmetic addition algorithm is done
using the $p$-adic digit forms of $N$ and $M$.
For example, the $2$-adic addition of $3+10$ equals
$(11)_2+(1010)_2=(1101)_2$.
Notice that there is 
a {\bf ($p$-adic) carry} in the $2$-nd digit. 
Given any nonnegative integers $N_1,\ldots,N_\ell$, 
we say the summation $\sum_{i=1}^{\ell}N_i$ 
is $p$-adically {\bf carry-free} if their $p$-adic addition
has no carry at each digit. 
We simply say `carry-free' when $p$ is clear from its context.

Multiplication of two nonnegative integers $N$ and $M$
can also be done $p$-adically,
using standard arithmetic rules in base $p$.
In the rare case where the $p$-adic multiplication 
of $N$ and $M$ has no carrying, the $k$-th $p$-adic digit
of $NM$ is equal to the convolution $n_0m_k + n_1m_{k-1} + \dots + n_km_0$.
Thus, we can say that multiplication is $p$-adically {\bf carry-free}
if $\sum_{i+j=k}m_i n_j\le p-1$ for every $k\ge 0$.

The notion of `$p$-adically carry-free' is given a more precise treatment
in the following proposition --- which we will use
to swiftly replace the notion of ``carry-free addition or multiplication''
by a more convenient characterization in terms of $p$-adic weights. 

\begin{proposition}[Triangle inequality]
  \label{P:2.1}
    Let $N,M \in \Z_{\ge 0}$. Then 
    $$s_p(N)+ s_p(M) \ge s_p(N+M), \quad s_p(N)s_p(M) \ge s_p(NM).
$$
The first equality holds if and only if the 
    $p$-adic addition $N+M$ is carry-free;
    the second equality holds if and only if the $p$-adic product 
    $NM$ is carry-free.
\end{proposition}
\begin{proof}
The first statement is classical; see \cite[Proposition 2.2]{HLS11} for example.
The remaining assertions follow directly from the base-$p$ carrying algorithm.
\end{proof}

\begin{proposition}
    \label{lem:sp_qi-j}
Let $a\in\Z_{\ge 1}$.
    Let $i,j$ be positive integers with $p^ai - j \ge 0$.  Then
    \[
        s_p(p^ai - j) \ge a(p-1) + s_p(i-1) - s_p(j-1).
    \]
    Equality holds if $j\le p^a$.
\end{proposition}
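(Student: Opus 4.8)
The plan is to reduce the inequality to the triangle inequality of Proposition~\ref{P:2.1}, applied to the splitting $p^a i - 1 = (p^a i - j) + (j-1)$, after first computing $s_p(p^a i - 1)$ exactly. For the latter, write $p^a i - 1 = p^a(i-1) + (p^a - 1)$. In $p$-adic digit form $p^a - 1 = (\,\underbrace{p-1\;\cdots\;p-1}_{a}\,)_p$ occupies exactly the bottom $a$ digits, while $p^a(i-1)$ is supported on digits $\ge a$; hence this addition is carry-free, and Proposition~\ref{P:2.1} gives $s_p(p^a i - 1) = s_p(p^a(i-1)) + s_p(p^a - 1) = s_p(i-1) + a(p-1)$.

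For the inequality itself, I would note that $p^a i - j \ge 0$ and $j - 1 \ge 0$ with $(p^a i - j) + (j-1) = p^a i - 1$, so Proposition~\ref{P:2.1} yields
\[
    s_p(p^a i - j) + s_p(j-1) \;\ge\; s_p(p^a i - 1) \;=\; a(p-1) + s_p(i-1),
\]
which rearranges to the claimed bound.

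For the equality when $j \le p^a$, I would show that the addition $(p^a i - j) + (j-1)$ is itself carry-free, so Proposition~\ref{P:2.1} forces equality above. The key point is that $p^a - j = (p^a - 1) - (j - 1)$ is the $a$-digit $p$-adic complement of $j - 1$: since $0 \le j - 1 \le p^a - 1$, write $j-1 = (d_{a-1}\;\cdots\;d_0)_p$; then every digit of $p^a - j$ equals $p - 1 - d_t \ge 0$, so $(p^a - j) + (j-1) = p^a - 1$ with no carries. Moreover $p^a i - j = p^a(i-1) + (p^a - j)$ with $p^a - j < p^a$, so the bottom $a$ digits of $p^a i - j$ are precisely those of $p^a - j$ and the remaining digits are those of $i - 1$. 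Adding $j - 1$, which is supported on the bottom $a$ digits only, therefore produces no carry at any position, and equality follows.

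The only real work is the bookkeeping with digit ranges --- verifying that $j-1$ and $p^a - j$ both fit inside the bottom $a$ $p$-adic digits when $j \le p^a$, and that the tail $p^a(i-1)$ never interacts with these additions --- so that "carry-free" can be checked position by position. I do not expect any deeper obstacle.
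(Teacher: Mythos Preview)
Your proof is correct and follows essentially the same route as the paper: apply Proposition~\ref{P:2.1} to the decomposition $(p^a i - j) + (j-1) = p^a i - 1$, compute $s_p(p^a i - 1) = s_p(i-1) + a(p-1)$ via the carry-free split $p^a(i-1) + (p^a-1)$, and for the equality case observe that the sum $(p^a i - j) + (j-1)$ is carry-free when $j \le p^a$. The only difference is that you spell out the digit-complement argument for carry-freeness in detail, whereas the paper asserts it in one line.
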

\begin{proof}
By Proposition \ref{P:2.1} above, 
we have
\begin{align*}
    s_p(p^ai-j)+s_p(j-1) &\ge s_p(p^ai-1)\\
                         &=s_p(p^a(i-1)+(p^a-1))\\
                         &=s_p(i-1)+s_p(p^a-1)=
    s_p(i-1) + a(p-1).
\end{align*}
If $j\le p^a$, then the summation of $(p^ai-j)+(j-1)$ is carry-free, hence
the equality holds by Proposition \ref{P:2.1}.
\end{proof}
We take a moment to highlight a particular case of Proposition
\ref{lem:sp_qi-j}: when $i=j$ and $i \le p^a$, we have
$s_p(i(p^a-1)) = a(p-1)$.

When we wish to emphasize when the product $NM$ is carry-free,
we will often typeset it as $N*M$.
For example,
$(110011)_2 * (101)_2$ is carry-free, but $(110011)_2\times (11)_2$
and $(242)_5\times (12)_5$ are not.

\section{A change-making problem and its 
solution value}
\label{S:change-making}
This section is a technical preparation
surrounding a particular
$p$-adic change-making problem.
In general, a ``change-making problem'' involves
finding the minimal number of ``coins'' in a given coin set 
needed to achieve
a particular fixed value.
More explicitly, given a set $\cC = \{c_1, \dots, c_m\}$ of positive integers
and a fixed integer $N$, the change-making problem involves finding
a tuple $(t_1, \dots, t_m)$ of nonnegative integers such that
$t_1 c_1 + \dots + t_m c_m = N$, and that minimizes $t_1 + \dots + t_m$.
See \cite{CLRS} for a broader analysis of change-making problems.

Let $\bf{i}$ be a finite set of positive integers coprime to $p$.
For any integer $a\ge 1$,  consider the index set of pairs
$$
    I \coloneqq \bi \times \{0, 1, \dots, a-1\}.
$$
We will fix a \textbf{system of coins} or \textbf{coin set}
\begin{equation}\label{E:coin_set}
\cC = \cC(\bi,a):=\left\{ip^j\mid i\in \bi, j=0,1,\ldots, a-1\right\}.
\end{equation}
Each integer $c_{ij} \coloneqq ip^j$ in $\mathcal{C}$ is called a \textbf{coin}.
Since every $i\in \bi$ is coprime to $p$, the indexing map $(i,j) \mapsto c_{ij}$
is a bijection between $I$ and $\cC$.
For every integer $N \ge 0$, let
\begin{equation}
\label{E:I(N)}
    \cT(N) = \left\{(t_{ij})_{(i,j)\in I} \in \Z_{\ge 0}^{I}
    \mid \sum_{(i,j)\in I} t_{ij}(ip^j) = N\right\}.
\end{equation}
Each element in $\cT(N)$ is called a \textbf{representation of $N$ (under $\cC$)}.

If a representation $\bt = (t_{ij})\in \cT(N)$ has the minimal $|\bt|\coloneqq\sum_{(i,j)\in I}t_{ij}$, among all elements in $\cT(N)$, then it is 
called a \textbf{solution to the $p$-adic change-making problem for $N$ (under
$\cC$)}.
Then we call 
$$M_{\cC}(N) \coloneqq |\mathbf{t}|=\sum_{i\in\bi}\sum_{j=0}^{a-1} t_{ij}$$
the \textbf{solution value} to the $p$-adic change-making problem. 
In other words, $M_\cC(N)$ is the minimal number of 
coins one needs to sum up to $N$.

The following lemma establishes a lower bound for solution values to the
change-making problem, and identifies certain cases where that
lower bound is both tight and achieved at a unique solution.
\begin{keylemma}
    \label{L:key}   
    Let $\cC=\{c_{ij}=ip^j|i\in \bi,  j=0,1,\ldots, a-1\}$
    where $\bi$ is a finite set consisting of positive integers
    coprime to $p$ and $\nu\in\bi$ such that $s_p(\nu)=\max_{i\in\bi}s_p(i)$.
    Then:
\begin{enumerate}    
\item For every nonnegative integer $N\ge 0$,
    \begin{equation*}
        M_\cC(N) \ge \frac{s_p(N)}{s_p(\nu)}. 
    \end{equation*}
\item If there is a carry-free factorization $N = \nu * w$
    for some $w < p^a$, then $M_\cC(N) = \frac{s_p(N)}{s_p(\nu)}$.
    If $\nu$ is the unique maximal $p$-adic weight element in $\bi$,
    then the converse is also true.
    In this case, there is a unique solution $(t_{ij})$
    to the change-making problem for $N$ under $\cC$, with 
    $w = \sum_{j=0}^{a-1} t_{\nu j}p^j$ and $t_{ij} = 0$ for all $i\ne \nu$.
    This expression gives the $p$-adic form of $w$; i.e., $t_{\nu j} \in
    \{0,\dots, p-1\}$ for all $j$.
\end{enumerate}
\end{keylemma}      
      
\begin{proof}      
(1) Let $\bt = (t_{ij})$ 
be a solution to the change-making problem for $N$ under $\cC$.
Then $N=\sum_{(i,j)\in I}t_{ij}c_{ij}$.
Note that $s_p(c_{ij}) = s_p(ip^j) = s_p(i) \le s_p(\nu)$
by hypothesis. 
Now by Proposition \ref{P:2.1},
we have 
\begin{equation}\label{E:chain}
s_p(N)\stackrel{(*)}{\le} \sum_{(i,j)}t_{ij}s_p(ip^j)=\sum_{(i,j)}t_{ij}s_p(i)
\stackrel{(**)}{\le} s_p(\nu)\sum_{(i,j)}t_{ij}=s_p(\nu) M_\cC(N).
\end{equation}
This immediately proves the desired inequality   
$
M_\cC(N)\ge\frac{s_p(N)}{s_p(\nu)}.
$
      
(2) 
Suppose $N=\nu w$ is carry-free. By Proposition \ref{P:2.1} $s_p(N)=s_p(\nu)s_p(w)$.
Write $w=\sum_{i=0}^{a-1}t_ip^i$ in $p$-adic form.
Then $N = \nu w = \sum_{i=0}^{a-1} t_i \nu p^i$.
But $\nu p^i \in \cC$, so $M_\cC(N) \le \sum_{i=0}^{a-1} t_i = s_p(w) =
\frac{s_p(N)}{s_p(\nu)}$.  Combined with the inequality of part (1),
the equality must hold.

Now suppose that $\nu$ is the unique maximal $p$-adic weight element in $\bi$.
Let $\bt=(t_{ij})$ be a solution to the change-making problem of $N$, 
$M_\cC(N)=\sum_{(i,j)}t_{ij}$ and 
$N=\sum_{(i,j)}t_{ij}i p^j$.
By hypothesis, we suppose $s_p(N)=s_p(\nu)M_\cC(N)$. Then all the equalities of \eqref{E:chain} hold. 
The ($**$) equality holds if and only if $s_p(i)=s_p(\nu)$ for every $t_{ij}\ne 0$ and hence 
$t_{ij}=0$ for all $i\ne \nu$.
The ($*$) equality holds if and only if the sum is carry-free. Combined with the above, 
the sum $\sum_{j=0}^{a-1} t_{\nu j} \nu p^j$ is carry-free, and this immediately implies
$t_{\nu j}\in\{0,1,\ldots,p-1\}$.
Now  $w\coloneqq\sum_{j=0}^{a-1}t_{\nu j}p^j$ is in its $p$-adic form.
Hence $w<p^a$. Notice that $\nu w = \nu\sum_{j=0}^{a-1} t_{\nu,j}p^j=N$. 
On the other hand, $s_p(\nu)s_p(w)=s_p(\nu)\sum_{(i,j)}t_{ij}=s_p(\nu) M_\cC(N)=s_p(N)$.
By Proposition \ref{P:2.1}, the product $\nu * w = N$ is carry-free.

In this case, we have $N=\nu * w$ where $w=\sum_{j=0}^{a-1} t_{\nu j}\ p^j$ in $p$-adic form.
Thus the solution $(t_{ij})$ is clearly unique since the $p$-adic form of $w$ is unique.
\end{proof}

We have observed in the Key Lemma that if the minimum 
$M_\cC(N) = \frac{s_p(N)}{s_p(\nu)}$ is achieved, then $\nu\mid N$. This depends on the given coin set $\cC$. 
If the coin set $\cC$ contains  
$\nu,\nu p,\ldots, \nu p^{a-1}$ whose $p$-adic weight 
$s_p(\nu)$ is the unique maximal, then we have the necessary and sufficient condition for the equality.

\section{\texorpdfstring{$p$}{p}-symmetric numbers and minimizers}
\label{S:symmetric}

Key Lemma \ref{L:key} illustrates the utility of carry-free products
when solving change-making problems for coin sets $\cC$ of Section \ref{S:change-making}.
The Dwork-theoretic approach we will employ
in Section \ref{S:estimate} will involve solving change-making problems
for values $N$ of the form $(q-1)\ell$;
therefore we dedicate an entire section here to describing carry-free
products for numbers of this form.

\subsection{\texorpdfstring{$p$}{p}-symmetric numbers}
\label{S:4.1} 

Fix any prime $p$ and a positive integer $n$.
We know the $n$-digit number with the highest $p$-adic weight is
$p^n-1=(p-1,p-1,\ldots,p-1)_p$ with $s_p(p^n-1)=n(p-1)$. 
More generally, consider the numbers $N=(p^k-1)\ell$ where $\ell<p^k$.
Then $s_p(N)=k(p-1)$ by Proposition \ref{lem:sp_qi-j}.
For example, $N=(3311)_5=(5^2-1)\cdot 19$ has $s_5(N)=8=2(5-1)$,
and $N=(11111111)_2=2^8-1$ has $s_2(N)=8=8(2-1)$.
Here, we focus on the numbers $\nu\in\Z_{\ge 1}$ which factor
numbers of the form $N=(p^k-1)\ell$ in a carry-free manner.

\begin{definition}\label{def:p-symmetry}
    An integer $\nu > 1$ coprime to $p$
    is \textbf{$p$-symmetric}
    if there exists a positive integer $w$ such that 
    the product $\nu w$ is carry-free
    and $\nu * w=(p^k - 1)\ell$
    for some positive integers $k$ and $\ell<p^k$.
\end{definition}

\begin{remark}
 The carry-free aspect of $p$-symmetric numbers has a combinatorial
    interpretation to it, similar to the $r$-tiling sequences described
    in \cite{SZ02c}.
    The number $N = (p^k-1)\ell$ may be expressed $p$-adically as 
    $N = (N_r \; N_{r-1} \; \cdots \; N_0)_p$,
    and the number $\nu$ may be expressed $p$-adically as
    $\nu = (\nu_s \; \nu_{s-1} \; \cdots \; \nu_0)_p$.
    When viewed as finite sequences of length $r+1$ and $s+1$
    respectively, $N$ having a $p$-adic carry-free factorization with $\nu$
    amounts to whether $(N_r, \dots, N_0)_p$ is equal to the 
    $p$-adic carry-free sum 
    of some shifts $(\nu_s,\dots,\nu_0,0,\dots,0)_p$ of $(\nu_s, \dots, \nu_0)_p$.
    Families of $p$-symmetric numbers are demonstrated in 
    Example \ref{ex:p-symmetry} below.
\end{remark}

\begin{proposition}
\label{P:determined}
Let $\nu$ be $p$-symmetric  and suppose
$\nu * w = (p^k-1)\ell$ with $1\le \ell<p^k$.
Then $k$ and $\ell$ are uniquely determined by $w$.
\end{proposition}

\begin{proof}
Suppose also $\nu*w=(p^{k'}-1)\ell'$ with $1\le \ell'<p^{k'}$.
Suppose $\ell\neq \ell'$, without loss of generality, and let $k< k'$. Since
$(p^k-1)\ell\equiv (p^{k'}-1)\ell'\bmod p^k$, we can write
$\ell=\ell'+cp^k$ for some $c\ge 1$. Then $\ell\ge p^k$, in contradiction to our hypothesis; hence $\ell=\ell'$.
\end{proof}

\begin{definition}\label{D:symmetric_number}
If $\nu$ is $p$-symmetric, we pick the smallest such $w$ 
such that $\nu * w$ is a carry-free product and $\nu * w = (p^k-1)\ell$
with $\ell<p^k$. Then such $k, \ell$ are unique
by Proposition \ref{P:determined}.
This equation is called the {\bf minimal factorization} for $\nu$.
If $\nu * w=(p^k-1)\ell$ is the minimal factorization
and $w = (w_e \; \dots \; w_0)_p$ with $w_e \ne 0$, then the nonnegative integer $e$
is called the {\bf shift factor} of $\nu$.
\end{definition}

\begin{example}[$p$-symmetric numbers and their properties]
\label{ex:p-symmetry}
    \-
    \begin{enumerate}
    \item 
    Any integer in $\Z_{>1}$ of the form $(p^k - 1)\ell$ is $p$-symmetric 
    if $\ell\in\Z_{\le p^k}$ and $\ell$ is coprime to $p$.
    In particular, $p^k-1$ is always $p$-symmetric. 
    \item As previously stated, the integer $w$ in Definition
    \ref{def:p-symmetry} is not unique.
    If $\nu$ is $p$-symmetric with $\nu * w = (p^k-1)\ell$,
    then for every integer $b \ge 1$,
    we have a carry-free factorization
    $\nu * w' = (p^{kb}-1)\ell$,
    where $w' = w(1 + p^k + \dots + p^{(b-1)k})$.
\item 
If $\nu$ is a coprime-to-$p$ integer in $\Z_{>1}$ with $s_p(\nu)\mid (p-1)$
then one can show $\nu$ has to be $p$-symmetric (see the proof of
Corollary \ref{C:spd-family} for an argument).
For example, if $p=5$ then 
the following are $p$-symmetric: $(101)_5$,
$(202)_5$, $(103)_5$, $(301)_5$, $(112)_5$, $(121)_5$, $(211)_5$.

    \item Any positive integer $1<\nu<p$ is $p$-symmetric if and only if
    $\nu\mid (p-1)$. If $\nu|(p-1)$ 
then $\nu*\frac{p-1}{\nu} = p-1$, hence $\nu$ is clearly $p$-symmetric; 
conversely, if $\nu$ is $p$-symmetric, then $\nu* w=(p^k-1)\ell$ for some $\ell<p^k$.
Write $w=(w_e\cdots w_0)_p$ in standard $p$-adic digit form.
Since $\nu<p$ and $\nu*w$ is carry-free, we have $\nu w_i\le p-1$, 
hence $\nu *w =(\nu w_e, \ldots,\nu w_0)_p$ in the standard $p$-adic digit form.
We may write $\ell=(\ell_{k-1},\ldots,\ell_0)_p$ where $\ell_0>0$ and $0\le \ell_i\le p-1$.
Then 
$$ \;\;\; \; p^k\ell-\ell=(\ell_{k-1},\ldots,\ell_1,\ell_0-1, p-1-\ell_{k-1},\ldots,
p-1-\ell_1,p-\ell_0)_p.$$ 
Considering the standard $p$-adic digit form of the equation 
$\nu*w =(p^k-1)\ell$, we sum up 
the $k$-th and $0$-th digits and get 
$\nu w_k+\nu w_0 = (\ell_0-1)+(p-\ell_0)$.
That is, $\nu(w_k+w_0)=p-1$. Thus we have $\nu\mid (p-1)$.

    \item If $w \mid p-1$, 
    then $\nu=(p^b-1)/w$ is $p$-symmetric for every $b\ge 1$ unless $\nu=1$.
    This follows from the obvious carry-free product 
    $       \nu w = p^b - 1.
    $
Its shift factor is $0$.
   For example, when $p=7$, the following are $7$-symmetric: $(666)_7$,
   $(333)_7$, $(222)_7$, $(111)_7$.

    \item As a dual to (2), 
        if $\nu$ is $p$-symmetric with factorization $\nu * w = (p^k-1)\ell$,
    then 
    $\nu'=\nu(1 + p^k + p^{2k} + \dots + p^{(b-1)k})$
    is $p$-symmetric for every $b\ge 1$.
    Part (5) is the special case $k=1$, $\ell=1$.
\item If $\nu$ is $p$-symmetric, 
    then its $p$-adic digit reversal is also $p$-symmetric.
    In other words, if $\nu=\sum_{i=0}^{s}\nu_i p^i$ is $p$-symmetric,
    so is its digital reverse $\nu':=\sum_{i=0}^{s}\nu_{s-i}p^i$. Indeed, if $\nu*w=(p^k-1)\ell$ is the minimal factorization, one can check that
 $\nu'*w'=(p^k-1)\ell'$, hence $\nu'$ is $p$-symmetric too.
 For example, 
    $(341)_5$ and $(143)_5$, $(301)_5$ and $(103)_5$, $(112)_5$ and $(211)_5$
    are such pairs of $5$-symmetric numbers.

\item 
If $\nu$ is $p$-symmetric with a minimal factorization $\nu* w=(p^k-1)\ell$ and $w>1$, 
then $w$ is also $p$-symmetric.
 
\item For any $n \ge 2$, $m \ge 0$,
    there exists at least one $p$-symmetric number $\nu$
    with $s_p(\nu) = n$ and shift factor $e\le m$.
    Namely, let $\nu = 1 + p^{m+1} + p^{2(m+1)} + \dots + p^{(n-1)(m+1)}$.
    Then $\nu$ is $p$-symmetric with
    $\nu * (p^{m+1}-1) = p^{n(m+1)}-1$.
    \end{enumerate}
\end{example}

\begin{example}[Shift factor of a $p$-symmetric number]
\label{ex:shift_factor}
\-
\begin{enumerate} 
\item The $p$-symmetric number $p^k-1$ has shift factor $e=0$.
\item The $p$-symmetric number $p^m+1$ has shift factor $e\le m-1$. 
E.g., $(10001)_p$ has shift factor $e=3$.
\item The $5$-symmetric number $(301)_5$ has shift factor $e=1$.
The $2$-symmetric number $(110011)_2$ has shift factor $e=2$.
\item   
The $p$-symmetric number $(100010001)_p$ has shift factor $e=3$.
\end{enumerate}
\end{example}

\begin{proposition}\label{P:k-e}
For any $p$-symmetric number $\nu$ with minimal factorization \\
$\nu *w = (p^k-1)\ell$ with
shift factor $e$, we have $k\ge e+1$ and $\ell \le \ell p^{k-e-1}<\nu$.
\end{proposition}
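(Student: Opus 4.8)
The plan is to prove the two assertions in turn, deriving the second from the first. First I would observe that $k\ge e+1$ is equivalent to $w<p^k$: since $w=(w_e\cdots w_0)_p$ with $w_e\ne 0$ we have $p^e\le w<p^{e+1}$, so $w<p^k$ forces $e<k$, while conversely $k\ge e+1$ gives $w<p^{e+1}\le p^k$.

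To prove $w<p^k$, suppose for contradiction that $w\ge p^k$, and let $w^*$ be the least positive integer with $w^*\equiv w\pmod{p^k-1}$, so that $1\le w^*\le p^k-1<p^k\le w$ and hence $w^*<w$. The first key point is that $s_p(w^*)\le s_p(w)$: setting $t\coloneqq (w-w^*)/(p^k-1)\in\Z_{\ge 0}$, one has $w+t=w^*+tp^k$, and since $w^*<p^k$ has all its $p$-adic digits in positions $<k$ while $tp^k$ has all of its in positions $\ge k$, the addition $w^*+tp^k$ is carry-free; so Proposition \ref{P:2.1} gives $s_p(w^*)+s_p(t)=s_p(w+t)\le s_p(w)+s_p(t)$, i.e.\ $s_p(w^*)\le s_p(w)$. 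The second key point is that $\nu w^*\equiv\nu w=(p^k-1)\ell\equiv 0\pmod{p^k-1}$, so $\ell^*\coloneqq \nu w^*/(p^k-1)$ is a positive integer with $\ell^*<\nu w/(p^k-1)=\ell<p^k$. By the equality case of Proposition \ref{lem:sp_qi-j} (applied with $i=j=\ell$ and with $i=j=\ell^*$), $s_p(\nu w)=s_p((p^k-1)\ell)=k(p-1)=s_p((p^k-1)\ell^*)=s_p(\nu w^*)$. Since $\nu*w$ is carry-free, Proposition \ref{P:2.3} yields $s_p(\nu)s_p(w)=s_p(\nu w)=k(p-1)$; applying Proposition \ref{P:2.3} again, $s_p(\nu)s_p(w^*)\ge s_p(\nu w^*)=k(p-1)=s_p(\nu)s_p(w)$, so $s_p(w^*)\ge s_p(w)$. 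Combining the two points gives $s_p(w^*)=s_p(w)$, hence $s_p(\nu)s_p(w^*)=s_p(\nu w^*)$, and the equality case of Proposition \ref{P:2.3} shows that $\nu*w^*$ is carry-free. Then $(k,\ell^*,w^*)$ is a carry-free factorization of the kind in Definition \ref{D:symmetric_number} with $w^*<w$, contradicting the minimality of $w$. Therefore $w<p^k$, and so $k\ge e+1$.

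For the second assertion, $\ell\le \ell p^{k-e-1}$ is immediate from $k-e-1\ge 0$. For $\ell p^{k-e-1}<\nu$, the identity $\nu w=(p^k-1)\ell$ makes this equivalent to $p^{k-e-1}w<p^k-1$; since $w\le p^{e+1}-1$ we get $p^{k-e-1}w\le p^k-p^{k-e-1}$, and when $k\ge e+2$ we have $p^{k-e-1}\ge p>1$, so $p^k-p^{k-e-1}\le p^k-p<p^k-1$, as wanted. When $k=e+1$ it remains only to exclude $w=p^k-1$: if $w=p^k-1$ then $\nu=\ell<p^k$, and carry-freeness of $\nu*w$ together with $s_p(w)=k(p-1)$ and $s_p(\nu w)=s_p((p^k-1)\nu)=k(p-1)$ (using $\nu<p^k$ in Proposition \ref{lem:sp_qi-j}) forces $s_p(\nu)=1$, so $\nu$ is a power of $p$, contradicting that $\nu>1$ is coprime to $p$.

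I expect the main obstacle to be the proof that $w<p^k$. The naive move --- replacing $w$ by $w-(p^k-1)$, which satisfies $\nu(w-(p^k-1))=(p^k-1)(\ell-\nu)$ with $\ell-\nu<p^k$ --- fails because this new product need not be carry-free, so it produces no contradiction. The remedy is to reduce $w$ modulo $p^k-1$ and then argue entirely with the additive and multiplicative triangle inequalities for $s_p$ (Propositions \ref{P:2.1} and \ref{P:2.3}), never touching the combinatorics of digit patterns: the carry-free addition $w+t=w^*+tp^k$ gives $s_p(w^*)\le s_p(w)$, the carry-free product $\nu*w$ together with $s_p(\nu w^*)=k(p-1)$ gives the reverse inequality, and the resulting equality $s_p(\nu)s_p(w^*)=s_p(\nu w^*)$ is exactly what makes $\nu*w^*$ carry-free.
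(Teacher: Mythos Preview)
Your proof is correct, and for the main claim $k\ge e+1$ it takes a genuinely different route from the paper. Both arguments exploit the minimality of $w$ by producing a smaller valid carry-free factorization, but the reductions differ. The paper splits $w=w_{+}p^k+w_{-}$ (after first bounding $e\le 2k-1$) and sets $w'=w_{+}+w_{-}$; it then uses the explicit digit identity $d_m+d_{m+k}=p-1$ for the $p$-adic digits of $(p^k-1)\ell$ to show that the addition $w_{+}+w_{-}$ is carry-free, whence $\nu*w'$ is carry-free and minimality forces $w_{+}=0$. Your reduction $w\mapsto w^*\equiv w\pmod{p^k-1}$ achieves the same effect in one stroke, and your carry-freeness argument is purely in terms of the triangle inequalities for $s_p$ (Propositions~\ref{P:2.1} and~\ref{P:2.3}), with no direct digit analysis. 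This is cleaner and avoids the preliminary bound $e\le 2k-1$.

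For the inequality $\ell p^{k-e-1}<\nu$, your treatment is in fact more careful than the paper's. The paper writes $\nu=\frac{p^k-1}{w}\ell>p^{k-e-1}\ell$ in one line, but $\frac{p^k-1}{w}>p^{k-e-1}$ is equivalent to $wp^{k-e-1}<p^k-1$, and from $w\le p^{e+1}-1$ one only gets $wp^{k-e-1}\le p^k-p^{k-e-1}$, which is strictly less than $p^k-1$ only when $k\ge e+2$. Your explicit handling of the boundary case $k=e+1$, ruling out $w=p^k-1$ via $s_p(\nu)=1$, closes this gap; the case is indeed vacuous, but the paper does not say so.
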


\begin{proof}
Write $w=p^kw_++w_-$ with $0\le w_-<p^k, w_+\ge 0$, and let $w'=w_++w_-$. 
Then $$\nu w'=\nu(w_+ + w_-)=(p^k-1)(\ell-\nu w_+).$$
Since $s_p(w')=s_p(w_-+w_+)\le s_p(w_+)+s_p(w_-)=s_p(w)$, 
we have $$s_p(\nu w')=k(p-1)\le s_p(\nu)s_p(w')\le s_p(\nu) s_p(w)=s_p(\nu w)=k(p-1).$$ 
Hence equality holds throughout. In particular, $s_p(\nu w')=s_p(\nu)s_p(w')$. 
By Proposition \ref{P:2.1}, the product $\nu w'$ is $p$-adic carry-free.
By the minimality of $w$, we must have $w=w'$. It follows that $w_+=0$. Hence 
$w=w_-<p^k$. 
Now $w=(w_e\cdots w_0)_p$ with $w_e\ne 0$, so $p^k>w\ge p^e$ and hence $e\le k-1$.
That is $k\ge e+1$. 

Finally, we have $\nu=\frac{p^k-1}{w}\ell >p^{k-e-1}\ell\ge \ell$. 
\end{proof}

\subsection{Solution value to $p$-adic change-making problems}

For the rest of this subsection 
we assume our coin set $\cC=\cC(\bi, a)$ (as introduced in \eqref{E:coin_set})
satisfies the following condition:
$\nu\in\bi$ is $p$-symmetric whose minimal factorization is
$\nu*w=(p^k-1)\ell$, $s_p(\nu)=\max_{i\in\bi}s_p(i)$, and $k|a$.

Suppose $\cC=\cC(\bi,a)$ with $a=k$.
Then the $p$-adic form $w=\sum_{j=0}^{k-1}w_jp^j$ of $w$ 
encodes a solution to the change-making problem for $(p^k-1)\ell$.
Let $t_{ij}=w_j$ for $i=\nu, j=0,\ldots,k-1$, and $t_{ij}=0$ otherwise.
By Key Lemma \ref{L:key}, we have
$M_{\cC}((p^k-1)\ell) = \sum_{i\in\bi}\sum_{j=0}^{k-1} t_{ij}=\sum_{j=0}^{k-1} w_j=s_p(w)=\frac{k(p-1)}{s_p(\nu)}$.
Moreover, by considering $\nu * (wp^r)$, we also have
$M_\cC(p^r(p^k-1)\ell) = \frac{k(p-1)}{s_p(\nu)}$
so long as 
$r+e \le a-1$, where $e$ is the shift factor of $\nu$.
Should $r$ exceed $a-1-e$,
then the coins in the representation encoded by $wp^r$
become too large, and fall out of the scope of $\cC$.

Suppose $\cC=\cC(\bi,a)$ with $a=bk$ for some $b\ge 1$.
The minimal factorization $\nu*w=(p^k-1)\ell$ gives
$\nu*w'=(p^a-1)\ell
$
where $w' = w(1+p^k + \dots + p^{(b-1)k})$
(recall Example \ref{ex:p-symmetry} (2)).
The $p$-adic digits of $w'$ encode a solution to the change-making problem of $(p^a-1)\ell$ in the same manner as that above for $w$.
Moreover, so long as $r \le k-e-1$, we will have $p^r w' < p^a$,
and so Key Lemma \ref{L:key} guarantees
that $M_\cC(p^r(p^a-1)\ell) = \frac{s_p(p^r(p^a-1)\ell)}{s_p(\nu)}$.

We summarize these observations in the following proposition:

\begin{proposition}
\label{P:3.10}
Let $\cC=\cC(\bi,a)$ be as above.
Let $e$ be the shift factor of $\nu$.
Then for $r \in \{0,1, \dots, k-e-1\}$,
\[
    M_\cC(p^r(p^a-1)\ell) = \frac{s_p(p^r(p^a-1)\ell)}{s_p(\nu)}
    =\frac{a(p-1)}{s_p(\nu)}.
\]
\qed
\end{proposition}
The following proposition pays special attention to the case where
$\nu = p^k-1$, offering a characterization of
when the lower bound is achieved.

\begin{proposition}\label{P:pk-1-case}
Let $\cC=\cC(\bi,a)$ be as above, where
$\nu=p^k-1 \in \bi$ is the unique maximal $p$-adic weight element.
    Then we have 
    $M_\cC((p^a - 1)\ell) = \frac{a(p-1)}{s_p(\nu)}$
    if and only if $\ell\in\{1,p,\ldots,p^{k-1}\}$.
\end{proposition}

\begin{proof}
Since $k|a$ we write $a=bk$ for some $b\in\Z$.
Notice that $\nu$ is $p$-symmetric with shift factor $0$.
The sufficient direction thus follows from Proposition \ref{P:3.10}.
   
    Conversely, suppose $M_\cC((p^a-1)\ell) = \frac{a(p-1)}{s_p(\nu)}$.
    Then 
    $M_\cC((p^a-1)\ell) = \frac{s_p((p^a-1)\ell)}{s_p(\nu)}$ and $\ell<p^a$.
    Applying Key Lemma \ref{L:key}, there exists a unique solution 
    $\bt=(t_{ij})_{(i,j)\in I}$ to the change-making problem of $(p^a-1)\ell$,
    and we have a carry-free factorization
    \begin{equation}\label{E:factor}
       (p^a - 1)\ell = (p^k-1)w
    \end{equation}
    where $w=\sum_{j=0}^{a-1}t_{p^k-1,j}p^j$ is in its $p$-adic form and $w<p^a$.
    Then
    \[
        w=\sum_{j=0}^{a-1} t_{p^k - 1, j} p^j=\frac{(p^a - 1)\ell}{p^k-1}
    \]
    and we have
    \begin{equation}\label{E:bt1}
      s_p\left(\frac{(p^a-1)\ell}{p^k-1}\right)
      = s_p(w)=\sum_{j=0}^{a-1}t_{p^k-1,j}=M_{\cC}((p^a-1)\ell)
       = \frac{a(p-1)}{k(p-1)}=b.
    \end{equation}
    Since $w<p^a$, \eqref{E:factor} implies $\ell<p^k$,
    and so the product $\ell (1+p^k+\ldots+p^{(b-1)k})$ must be carry-free. 
    Applying Proposition \ref{P:2.1} we now have 
    $s_p(\ell)s_p(1+p^k+\ldots+p^{(b-1)k})=s_p(\ell(1+p^k+\ldots+p^{(b-1)k}))$. 
    Combined with \eqref{E:bt1}, we see that
    $$s_p(\ell)=\frac{s_p(\frac{\ell(p^a-1)}{p^k-1})}{s_p(1+p^k+\ldots+p^{(b-1)k})}=\frac{b}{b}=1.
   $$ 
   Thus $\ell$ is a $p$-power.
   Since $\ell<p^k$, we finish by observing that
   $\ell\in\{1,p,\ldots,p^{k-1}\}$.
 \end{proof}

\subsection{Minimizers and heights}
\label{S:4.3}
This final technical preparation aims to handle the
permutations that arise from a characteristic power series
that lies at the heart of our Dwork-theoretic approach.

For the remainder of this section, we fix a coin set
$\cC=\cC(\bi,a)$ for some $a\ge 1$ and $\bi$ such that 
$\bi$ contains a unique element $\nu$
with maximal $p$-adic weight.

\begin{definition}
\label{D:minimizer}
Let $\bSig$ denote the set of all pairs $(\cI,\sigma)$ 
where $\cI$ is a finite nonempty set of positive integers
and $\sigma$ is a permutation of $\cI$.
A pair $(\cI,\sigma)$ in $\bSig$ is a {\bf minimizer}  if 
$M_\cC(p^a\ell - \sigma \ell) = \frac{s_p(p^a \ell -
    \sigma\ell)}{s_p(\nu)}$ for every $\ell \in \cI$. 
Let $$\Sb \coloneqq \{(\cI,\sigma)\in \bSig \mid (\cI,\sigma) \mbox{ is a minimizer}\}.$$
For each positive integer $m$, 
let 
$$\bSig_m \coloneqq \{(\cI,\sigma)\in\bSig \mid \#\cI=m\}.
$$
\end{definition}

The following proposition is a straightforward consequence of
Definition \ref{D:minimizer}.
\begin{proposition}
\label{P:hereditary}
Let $(\cI,\sigma)\in \bSig$.
Suppose $\sigma=\prod_i \sigma_i$ where $\sigma_i$'s are (disjoint) permutations on sets $\cI_i$, where $\cI$ is the disjoint union of $\cI_i$'s.
Then $(\cI,\sigma)$ is a minimizer if and only if all $(\cI_i, \sigma_i)$ are minimizers.
In particular, if $\sigma_i$'s are the decomposition cycle factors of $\sigma$, and $\cI_i$ is the underlying integers in the cycle $\sigma_i$,
then $(\cI,\sigma)$ is a minimizer if and only if
all $(\cI_i,\sigma_i)$ are cyclic minimizers. \qed
\end{proposition}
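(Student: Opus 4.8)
The plan is to unwind the definitions of $\bSig$, $\Sb$, and $M_\cC$ and observe that being a minimizer is a property that is checked pointwise over the set $\cI$. Fix $(\sigma,\cI)\in\bSig$ with the given disjoint decomposition $\sigma=\prod_i\sigma_i$ acting on $\cI=\bigsqcup_i\cI_i$, where each $\sigma_i$ is a permutation of $\cI_i$ and $\sigma_i$ acts as the identity off $\cI_i$. The key observation is that for $\ell\in\cI_i$, the value $\sigma\ell$ equals $\sigma_i\ell$, so the quantity $q\ell-\sigma\ell$ appearing in Definition \ref{D:minimizer} depends only on the cycle factor $\sigma_i$ containing $\ell$, not on the whole permutation $\sigma$. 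Hence the condition $M_\cC(q\ell-\sigma\ell)=\frac{s_p(q\ell-\sigma\ell)}{s_p(\nu)}$ for a given $\ell$ is literally the same condition whether one reads it for $(\sigma,\cI)$ or for $(\sigma_i,\cI_i)$.

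First I would make this precise: by definition, $(\sigma,\cI)$ is a minimizer iff the displayed equality holds for every $\ell\in\cI$; since $\cI=\bigsqcup_i\cI_i$, this is equivalent to: for every $i$ and every $\ell\in\cI_i$, the equality holds for that $\ell$. By the observation above that $\sigma\ell=\sigma_i\ell$ for $\ell\in\cI_i$, this is in turn equivalent to: for every $i$, $(\sigma_i,\cI_i)$ is a minimizer. That settles the first assertion. For the ``in particular'' clause, I would simply specialize to the case where the $\sigma_i$ are the disjoint cycles in the cycle decomposition of $\sigma$ and $\cI_i$ is the support (underlying set of moved-or-fixed integers, i.e.\ the orbit) of $\sigma_i$; then each $(\sigma_i,\cI_i)$ is by definition a cyclic minimizer (a minimizer whose permutation is a single cycle), and the general statement gives the claim. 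One small point worth stating explicitly: the cycle factors must be read as permutations of their own orbit sets $\cI_i$, so that $\cI$ is genuinely the disjoint union of the $\cI_i$ and the hypothesis of the first part applies verbatim.

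Honestly there is no real obstacle here — the statement is flagged in the text as ``a straightforward consequence of Definition \ref{D:minimizer},'' and the proof is already marked with \qed in the excerpt, indicating the authors regard it as immediate. The only thing to be careful about is the bookkeeping: ensuring that ``disjoint permutations on sets $\cI_i$'' is interpreted so that each $\sigma_i$ fixes everything outside $\cI_i$ (equivalently, acts as a permutation of $\cI_i$ extended by the identity), which is what makes $\sigma\ell=\sigma_i\ell$ valid for $\ell\in\cI_i$. With that convention in place, the argument is the one-line pointwise decomposition described above, and no computation with $p$-adic weights or change-making values is needed beyond invoking Definition \ref{D:minimizer} itself.
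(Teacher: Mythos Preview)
Your proposal is correct and matches the paper's approach: the paper marks the proposition with \qed\ and states it is ``a straightforward consequence of Definition \ref{D:minimizer},'' which is exactly the pointwise argument you give. There is nothing to add.
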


\begin{proposition}
\label{P:bounded_min}
\-
\begin{enumerate}

\item  If $(\cI,\sigma)\in \Sb$ then $\cI\subseteq \{1,2,\ldots,\nu\}$.

\item Define a relation on $\Sb$ as follows: $(\cI,\sigma)\le (\cI',\sigma')$ if 
$\cI \subseteq \cI'$ and $\sigma$ is a decomposition factor of $\sigma'$. 
Then $\Sb$ is partially ordered under $\le$, and if it is nonempty,
then there is a unique maximum $(\Ib,\sbb)$.
\end{enumerate}
\end{proposition}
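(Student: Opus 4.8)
The plan is to prove both parts essentially from the definitions together with the combinatorial control already established in Proposition~\ref{P:bounded_min}'s predecessors. For part (1), suppose $(\sigma,\cI)\in\Sb$ and let $\ell\in\cI$. The minimizer condition says $M_\cC(q\ell-\sigma\ell)=\frac{s_p(q\ell-\sigma\ell)}{s_p(\nu)}$, so Key Lemma~\ref{L:key}(2) applies (with $N=q\ell-\sigma\ell$, noting $q=p^a$): since $\nu$ is the unique maximal $p$-adic weight element of $\bi$, we get a carry-free factorization $q\ell-\sigma\ell=\nu*w$ with $w<p^a$, and in particular $\nu\mid q\ell-\sigma\ell$. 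I would then run the same argument with $\sigma\ell$ in place of $\ell$ (it also lies in $\cI$ since $\sigma$ permutes $\cI$), iterating around the cycle of $\sigma$ containing $\ell$; summing the telescoping relations $q\ell_{j}-\ell_{j+1}\equiv 0\pmod\nu$ around a cycle $\ell_0\mapsto\ell_1\mapsto\cdots\mapsto\ell_{r-1}\mapsto\ell_0$ of length $r$ gives $(q^r-1)\ell_0\equiv 0\pmod\nu$ — but I actually want a direct bound on the size of each $\ell$, not just a divisibility. The cleaner route: from $q\ell-\sigma\ell=\nu*w$ carry-free with $w<p^a=q$, we get $q\ell-\sigma\ell=\nu w<\nu q$, hence $\ell-\frac{\sigma\ell}{q}<\nu$, so $\ell<\nu+\frac{\sigma\ell}{q}\le\nu+\frac{\max\cI}{q}$. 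Applying this to the element of $\cI$ achieving $\max\cI$ (which maps to some element of $\cI$ under $\sigma$) forces $\max\cI<\nu+\frac{\max\cI}{q}$, i.e. $\max\cI(1-\frac1q)<\nu$, and since $q\ge p\ge 2$ and $\max\cI,\nu$ are integers this yields $\max\cI\le\nu$. Thus $\cI\subseteq\{1,2,\dots,\nu\}$.

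For part (2), antisymmetry and transitivity of $\le$ are immediate from the corresponding properties of set inclusion and of ``is a decomposition factor of'' (the latter being a partial order on permutations once one fixes that $\sigma$ a decomposition factor of $\sigma'$ means $\cI\subseteq\cI'$ and $\sigma'|_{\cI}=\sigma$, $\sigma'|_{\cI'\setminus\cI}$ a permutation of $\cI'\setminus\cI$). Reflexivity is clear. For the existence of a maximum when $\Sb\ne\emptyset$: by part (1) every $(\sigma,\cI)\in\Sb$ has $\cI\subseteq\{1,\dots,\nu\}$, so there are only finitely many such pairs, and in particular only finitely many underlying sets $\cI$ appear. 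I would let $\Ib\coloneqq\bigcup_{(\sigma,\cI)\in\Sb}\cI$ and show this union is itself the index set of a minimizer. The key structural input is Proposition~\ref{P:hereditary}: a pair is a minimizer iff each of its cycle factors is a cyclic minimizer. So consider the set of all cyclic minimizers $(\tau,\cJ)$ that arise as cycle factors of some element of $\Sb$. I claim their underlying sets $\cJ$ are pairwise disjoint: if two such cycles shared an integer $\ell$, then since a cyclic minimizer is determined by $\ell$ together with the forced relation $\sigma\ell'=q\ell'-\nu w_{\ell'}$ at each step — more precisely, I must argue that the ``next element'' $\sigma\ell'$ in a cyclic minimizer is uniquely determined by $\ell'$. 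This is the crux and the main obstacle.

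The disjointness claim reduces to: \emph{for a fixed $\ell'\in\{1,\dots,\nu\}$ coprime to $p$, there is at most one value $m$ with $m$ coprime to $p$, $1\le m\le\nu$, such that $q\ell'-m$ admits a carry-free factorization $\nu*w$ with $w<p^a$.} Indeed if $q\ell'-m=\nu w$ and $q\ell'-m^*=\nu w^*$ are both carry-free with $w,w^*<p^a$, then $\nu(w-w^*)=m^*-m$ with $|m^*-m|<\nu$, forcing $w=w^*$ and $m=m^*$. (Here I use that $w,w^*\ge 0$; if $q\ell'-m<0$ there is nothing, and $q\ell'-m=0$ would need $\nu\mid q\ell'$, handled separately or excluded since $p\nmid\ell'$ and $p\nmid\nu$ force... actually $q\ell'-m=0$ means $m=q\ell'>\nu$ for $\ell'\ge 1$, contradicting $m\le\nu$ once $q>\nu$; the small cases I would check directly.) So the successor map is well-defined, the cyclic minimizers partition their union into disjoint cycles, and assembling all of them over a common underlying set $\Ib\coloneqq\bigcup_{(\sigma,\cI)\in\Sb}\cI$ gives a single permutation $\sbb$ of $\Ib$ whose cycle factors are all cyclic minimizers; by Proposition~\ref{P:hereditary}, $(\sbb,\Ib)\in\Sb$, and by construction it dominates every element of $\Sb$ under $\le$, and any two maxima would be equal by antisymmetry. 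I expect the bookkeeping around the edge cases $q\ell'-m\le 0$ and verifying that the glued-together permutation really has \emph{every} cycle a cyclic minimizer (not just the ones we started with) to be the fiddly part, but the uniqueness-of-successor observation is what makes it all go through.
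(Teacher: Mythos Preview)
Your approach to both parts is essentially the paper's approach. For part (1), the paper also picks the largest element (more precisely, an $\ell'>\nu$ with $\sigma\ell'\le\ell'$) and uses the bound $w<q$ from Key Lemma~\ref{L:key}(2). For part (2), the paper reduces exactly as you do via Proposition~\ref{P:hereditary} to showing distinct cyclic minimizers have disjoint supports, and proves this by the same ``uniqueness of successor'' observation: if $\nu\mid(q\ell'-\sigma_1\ell')$ and $\nu\mid(q\ell'-\sigma_2\ell')$ then $\nu\mid(\sigma_1\ell'-\sigma_2\ell')$, while $|\sigma_1\ell'-\sigma_2\ell'|\le\nu-1$ forces equality.

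There is one genuine (though easily repaired) slip in your part (1). From $w<q$ you write $\nu w<\nu q$ and deduce $\max\cI(1-\tfrac1q)<\nu$; you then claim integrality and $q\ge 2$ give $\max\cI\le\nu$. This fails in general: the inequality only gives $\max\cI<\tfrac{q}{q-1}\nu$, and $\tfrac{\nu}{q-1}$ need not be $\le 1$ (nothing in the standing hypotheses forces $\nu<q$). The fix is to sharpen $w<q$ to $w\le q-1$ (since $w$ is an integer), giving $q\ell-\sigma\ell\le\nu(q-1)$; with $\ell=\max\cI$ and $\sigma\ell\le\ell$ you get $(q-1)\ell\le(q-1)\nu$, hence $\ell\le\nu$ with no side condition. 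This is precisely the inequality the paper uses.

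Your worries at the end are non-issues. Once cyclic minimizers are pairwise disjoint, the glued permutation $\sbb$ on $\Ib$ has \emph{exactly} those cycles (no new ones appear), so Proposition~\ref{P:hereditary} applies directly. The edge case $q\ell'-m\le 0$ does not arise inside a minimizer, since the minimizer condition presupposes $q\ell-\sigma\ell\ge 0$ for $M_\cC$ to be defined.
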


\begin{proof}
(1) Suppose there exists $\ell\in \cI$ such that $\ell>\nu$.
Then we observe that there exists $\ell'\in \cI$ such that 
$\ell'>\nu$ and $\sigma(\ell')\le \ell'$. 
Thus $p^a\ell'-\sigma\ell' >\nu(p^a-1)$.
But by Key Lemma \ref{L:key},
there is a carry-free factorization
$p^a\ell' - \sigma\ell' = \nu * w$, with $w < p^a$.
This implies $p^a\ell' - \sigma\ell' \le \nu(p^a-1)$,
which is a clear contradiction.

(2) It is easy to see that `$\le$' defines a partial ordering on $\Sb$;
we omit the routine verification and continue to showing the existence
of a unique maximal element.
By Proposition \ref{P:hereditary}, it suffices to show that
any two distinct cyclic minimizers must have disjoint underlying sets $\cI$.

If $\#\Sb \in \{0, 1\}$ then there is nothing to prove,
so suppose $\#\Sb \ge 2$, and let $(\cI_1, \sigma_1), (\cI_2,\sigma_2) \in \Sb$
be distinct cyclic minimizers.
We wish to prove that $\cI_1 \cap \cI_2 = \varnothing$, so
suppose for contradiction that $\ell\in\cI_1 \cap \cI_2$.
Let $m\ge 1$ be the smallest 
    such that $\sigma_1^m(\ell)\ne \sigma_2^m(\ell)$. 
(If no such $m$ exists, then $\sigma_1=\sigma_2$ and then $\cI_1=\cI_2$, which contradicts our hypothesis.)   
    Write $\ell'=\sigma_1^{m-1}(\ell)$ --- with $\ell' = \ell$ if $m=1$.
    Our hypothesis implies by the Key Lemma 
    that $\nu|(p^a\ell'-\sigma_i\ell')$ for $i=1,2$ hence $\nu|(\sigma_1\ell'-\sigma_2\ell')$.
   On the other hand, we have
   $|\sigma_1\ell'-\sigma_2\ell'|\le \nu-1$ since 
   $\sigma_i\ell'\in\{1,\ldots,\nu\}$ by part (1) above.
   Thus it must follow that $\sigma_1\ell' = \sigma_2\ell'$,
   contrary to assumption.
Thus $\cI_1\cap\cI_2=\varnothing$,
and the proof is complete.
\end{proof}

\begin{definition}
\label{D:height}
Suppose $\Sb$ is non-empty.
Then by Proposition \ref{P:bounded_min} there is a unique maximal
minimizer $(\Ib,\sbb)$, 
and we call the positive integer 
$t(\cC):=\#\Ib$ the {\bf minimizer height} of $\cC$.
\end{definition}

\begin{proposition}
\label{P:uniq-perm}
Let $\cC=\cC(\bi,a)$ where $\bi$ contains a unique element $\nu$ with maximal $p$-adic weight. 
\begin{enumerate}
\item If $\nu$ is $p$-symmetric with minimal factorization $\nu * w=(p^k-1)\ell$
and $k|a$, then $\Sb\neq \varnothing$. 
\item Suppose $\nu<p^a$. If $\Sb\neq \varnothing$, 
then $\nu$ is $p$-symmetric. 
\end{enumerate}
\end{proposition}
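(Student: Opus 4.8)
The plan is to prove part (1) by exhibiting one explicit minimizer, and part (2) by extracting a carry-free factorization of $\nu$ from the cyclic structure of the minimizing permutation.

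For (1): write the minimal factorization $\nu * w = (p^k-1)\ell$. Minimality of $w$ forces $p\nmid w$, and since $p\nmid\nu$ this gives $p\nmid\ell$, so $\{\ell\}$ is an admissible underlying set for a pair in $\bSig$. I would take $(\id,\{\ell\})$ with $\id$ the identity permutation of $\{\ell\}$, and verify directly that it is a minimizer: writing $a=bk$ (using $k\mid a$), Example \ref{ex:p-symmetry}(2) gives a carry-free factorization $\nu * w'=(p^a-1)\ell$ with $w'=w(1+p^k+\cdots+p^{(b-1)k})$, and $w'=(p^a-1)\ell/\nu<p^a=q$ because $\ell<\nu$ by Proposition \ref{P:k-e}. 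Since $q\ell-\id(\ell)=(q-1)\ell=(p^a-1)\ell=\nu * w'$, Key Lemma \ref{L:key}(2) yields $M_\cC((p^a-1)\ell)=s_p((p^a-1)\ell)/s_p(\nu)$, so $(\id,\{\ell\})\in\Sb$ and $\Sb\ne\varnothing$.

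For (2): assume $\nu<q$ and fix any minimizer $(\sigma,\cI)\in\Sb$; Proposition \ref{P:bounded_min}(1) gives $\cI\subseteq\{1,\ldots,\nu\}$. Choose a cycle $(\ell_1\,\ell_2\,\cdots\,\ell_m)$ of $\sigma$ with $\sigma(\ell_i)=\ell_{i+1}$ and $\ell_{m+1}=\ell_1$. For each $i$, the minimizer condition together with the converse direction of Key Lemma \ref{L:key}(2) (available since $\nu$ is the \emph{unique} maximal $p$-adic weight element) produces a carry-free factorization $q\ell_i-\ell_{i+1}=\nu * w_i$ with $w_i<q$. Unrolling the recursion $\ell_{i+1}=q\ell_i-\nu w_i$ around the cycle eliminates all $\ell_i$ with $i\ge 2$ and leaves
\[
    (q^m-1)\ell_1=\nu W,\qquad W\coloneqq\sum_{i=1}^{m}q^{\,m-i}w_i,
\]
which, since $q=p^a$, reads $(p^{am}-1)\ell_1=\nu W$ with $W\ge 1$ and $\ell_1\le\nu<q\le p^{am}$.

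The remaining — and most delicate — step is to show this factorization is carry-free; I would do this by a $p$-weight count. Since $\ell_1\le p^{am}$, the remark after Proposition \ref{lem:sp_qi-j} gives $s_p(\nu W)=s_p((p^{am}-1)\ell_1)=am(p-1)$. On the other hand the triangle inequality (Proposition \ref{P:2.1}) gives $s_p(W)\le\sum_i s_p(w_i)$, while carry-freeness of $\nu * w_i$ (Proposition \ref{P:2.3}) combined with the equality case of Proposition \ref{lem:sp_qi-j} (applicable because $\ell_{i+1}<q=p^a$) gives, for each $i$,
\[
    s_p(\nu)\,s_p(w_i)=s_p(q\ell_i-\ell_{i+1})=a(p-1)+s_p(\ell_i-1)-s_p(\ell_{i+1}-1).
\]
Summing over the cycle the last two terms telescope, so $s_p(\nu)\sum_i s_p(w_i)=am(p-1)$; hence $s_p(\nu)\,s_p(W)\le am(p-1)=s_p(\nu W)$, and together with $s_p(\nu)\,s_p(W)\ge s_p(\nu W)$ (Proposition \ref{P:2.3}) we get equality, so $\nu W=(p^{am}-1)\ell_1$ is carry-free. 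This exhibits $\nu$ (which is $>1$ in the relevant setup) as $p$-symmetric, with $k=am$, with $\ell_1$ in the role of $\ell$, and $W$ in the role of $w$. The main obstacle is exactly this last verification: the divisibility $\nu\mid(p^{am}-1)\ell_1$ is automatic, but carry-freeness is not, and it is forced only by invoking the sharp (equality) form of Proposition \ref{lem:sp_qi-j} at every step of the cycle and reading the telescoping sum back through Proposition \ref{P:2.3}.
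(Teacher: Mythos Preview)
Your proof is correct and follows the paper's approach: for (1) both exhibit $(\id,\{\ell\})$ as a minimizer (you inline what the paper packages as Proposition~\ref{P:3.10}), and for (2) both pass to a cycle of a minimizer, extract $q\ell_i-\ell_{i+1}=\nu*w_i$ with $w_i<q$ from Key Lemma~\ref{L:key}(2), and assemble these into $\nu W=(q^m-1)\ell_1$ with $\ell_1<q^m$. Your telescoping $s_p$-count verifying that $\nu*W$ is carry-free is in fact more careful than the paper's one-line assertion (``since $w_i<q$ for all $i$, the product $\nu w$ is also carry-free''), which does not obviously suffice on its own since $\nu w_i$ can have up to $2a$ $p$-adic digits and hence overlap across adjacent $q$-blocks.
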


\begin{proof}
(1) 
By Proposition \ref{P:3.10}, the pair $(\{\ell\},\id)$
is a minimizer (where $\id$ is the identity permutation). 
That is $(\{\ell\},\id)\in \Sb$.  

(2) 
For any given minimizer in $\Sb$,
by Proposition \ref{P:hereditary}, all of its cycle decomposition factors
are also in $\Sb$.
So we may let  $(\cI,\sigma)$ be a cyclic minimizer in $\Sb$,
with $\#\cI=m$ and $\sigma$ being an $m$-cycle.   
We then have
$$\left\{
\begin{array}{rcl}
p^a\ell-\sigma\ell&=& \nu *w_{m-1}\\
p^a\sigma\ell-\sigma^2\ell &=& \nu *w_{m-2}\\
&\vdots &\\
p^a\sigma^{m-1}\ell-\ell &=& \nu *w_0
\end{array}\right.
$$
for some $0\le w_0,\ldots,w_{m-1} < p^a$ such that 
$\nu * w_{m-1}, \ldots, \nu *w_1, \nu *w_0$ are all carry-free products by Key Lemma \ref{L:key}.
Write $w\coloneqq p^{a(m-1)}w_{m-1}+\cdots +p^a w_1+w_0$. 
Since $w_i<p^a$ for all $i$, 
the product $\nu w$ is also carry-free.
Now,
\begin{align*}
\nu * w &=\nu (p^{a(m-1)}w_{m-1}+\cdots +p^a w_1 + w_0) \\
     &=p^{a(m-1)}(\nu *w_{m-1})+\cdots +p^a(\nu *w_1)+(\nu *w_0)\\
     &=p^{a(m-1)}(p^a\ell-\sigma\ell)+ \cdots +p^a(p^a\sigma^{m-2}\ell-\sigma^{m-1}\ell)+(p^a\sigma^{m-1}\ell-\ell)\\
     &=p^{am}\ell -\ell.
\end{align*}
That is, we have a carry-free factorization $\nu * w = (p^{am}-1)\ell$, where 
$\ell <p^{am}$ under the hypothesis,
so $\nu$ is a $p$-symmetric number.
\end{proof}

\begin{proposition}
\label{P:bounded_min2}
Let $\cC$ be as in Proposition \ref{P:uniq-perm}.
\begin{enumerate}
\item
If $\nu$ is $p$-symmetric with minimal factorization 
$\nu * w = (p^k-1)\ell$ where $k|a$ and has shift factor $e$,  
then $k-e\le t(\cC)\le \nu$. 
\item 
If $\nu=p^k-1$, $k|a$, then $t(\cC)=k$.
\end{enumerate}
\end{proposition}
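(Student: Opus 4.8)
\emph{Plan.} For part (2), I would show both inequalities $t(\cC) \le k$ and $t(\cC) \ge k$. The upper bound $t(\cC) \le \nu = p^k-1$ already follows from part (1) (or directly from Proposition \ref{P:bounded_min}(1)), but here we need the sharper $t(\cC) \le k$. The strategy is to understand what a cyclic minimizer can look like when $\nu = p^k-1$ is the unique maximal weight element, using Proposition \ref{P:pk-1-case} as the crucial input: it tells us that whenever $M_\cC((q\ell' - \sigma\ell')) $ meets the lower bound — writing $q\ell' - \sigma\ell'$ via Key Lemma \ref{L:key} as a carry-free factorization $\nu * w'$ — the resulting ``$\ell$-part'' must be a power of $p$. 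So along each step of a cyclic minimizer $(\sigma,\cI)$, the differences $q\sigma^{r}\ell - \sigma^{r+1}\ell$ are forced into a very rigid form.

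\emph{Key steps.} First I would recall from Proposition \ref{P:uniq-perm}(1) (with $e = 0$, $\ell = 1$ in the factorization $\nu * 1 = p^k - 1$, so here $a = k$ suffices, but in general $k \mid a$) that $\Sb \ne \varnothing$, so $t(\cC)$ is well-defined. Next, let $(\sigma, \cI)$ be the maximal minimizer $(\sbb, \Ib)$ and decompose it into cycles via Proposition \ref{P:hereditary}; it suffices to bound the length of each cyclic minimizer. So let $(\sigma, \cI)$ be a cyclic minimizer, $\sigma$ an $m$-cycle on $\cI = \{\ell, \sigma\ell, \dots, \sigma^{m-1}\ell\}$. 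As in the proof of Proposition \ref{P:uniq-perm}(2), assemble the carry-free factorizations $q\sigma^r\ell - \sigma^{r+1}\ell = \nu * w_r$ with $w_r < q$ into $\nu * w = (q^m - 1)\ell$ where $w = \sum q^r w_r$. Now apply Proposition \ref{P:pk-1-case} with $a$ replaced by $m k / \gcd(\dots)$ — more carefully: $\nu = p^k-1$ and $\nu * w = (q^m-1)\ell = (p^{am}-1)\ell$, so by Proposition \ref{P:pk-1-case} (noting $k \mid am$) the equality $M_\cC((p^{am}-1)\ell) = \frac{am(p-1)}{s_p(\nu)}$ forces $\ell \in \{1, p, \dots, p^{k-1}\}$. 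Hence $\cI \subseteq \{1, p, p^2, \dots, p^{k-1}\}$, which has exactly $k$ elements, giving $m \le k$ and therefore $t(\cC) \le k$.

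For the reverse inequality $t(\cC) \ge k$, I would exhibit an explicit minimizer of height $k$. The natural candidate is $\cI = \{1, p, p^2, \dots, p^{k-1}\}$ with $\sigma$ the cyclic permutation $p^i \mapsto p^{i+1}$ (indices mod $k$, so $p^{k-1} \mapsto 1$). One then checks each $q\sigma^i\ell - \sigma^{i+1}\ell$ is a carry-free multiple of $\nu = p^k-1$: for $0 \le i \le k-2$, $q \cdot p^i - p^{i+1} = p^i(q - p) = p^{i+1}(p^{a-1} - 1)$, and for the wrap-around $q \cdot p^{k-1} - 1 = p^{k-1}(p^a - 1) + (p^{k-1} - 1)$. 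I would verify directly that each such difference equals $(p^k-1) * w$ for an appropriate $w < q$ (carry-free), invoking Proposition \ref{lem:sp_qi-j} and Proposition \ref{P:2.3} to confirm $s_p(\text{difference}) = k(p-1) \cdot s_p(w)$ and hence $M_\cC = \frac{s_p}{s_p(\nu)}$. This shows $(\sigma, \cI) \in \Sb$ with $\#\cI = k$, so $t(\cC) \ge k$. Combined with the upper bound, $t(\cC) = k$.

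\emph{Main obstacle.} The delicate point is the wrap-around step in both directions: verifying that $q\sigma^{k-1}\ell - \ell$ (which is not simply a shift of a power of $p$ times $p^a-1$) is genuinely a \emph{carry-free} multiple of $p^k-1$ with cofactor below $q$, and conversely extracting from Proposition \ref{P:pk-1-case} that the entire cycle is trapped inside $\{1,p,\dots,p^{k-1}\}$ — this requires correctly matching the ``$a$'' of Proposition \ref{P:pk-1-case} with the composite modulus $am$ arising from iterating the cycle, and checking the divisibility hypothesis $k \mid am$ holds. I expect the bookkeeping with $p$-adic digit positions in the carry-free verifications to be the most error-prone part, though conceptually routine given Propositions \ref{lem:sp_qi-j}, \ref{P:2.3}, and \ref{P:pk-1-case}. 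Part (1)'s lower bound $k - e \le t(\cC)$ would follow similarly by building an explicit cyclic minimizer out of the shifts $p^0 w, p^1 w, \dots, p^{k-e-1} w$ licensed by Proposition \ref{P:3.10}, with the $\nu$-coins threaded through a $(k-e)$-cycle; the upper bound $t(\cC) \le \nu$ is Proposition \ref{P:bounded_min}(1).
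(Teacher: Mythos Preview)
Your lower-bound constructions in both parts are incorrect. For part (2), the cyclic permutation $\sigma: p^i \mapsto p^{i+1 \bmod k}$ on $\cI = \{1, p, \ldots, p^{k-1}\}$ is \emph{not} a minimizer. For $0 \le i \le k-2$ one has $q p^i - p^{i+1} = p^{i+1}(p^{a-1}-1)$; since $k \mid a$ and $k \ge 2$ we have $k \nmid (a-1)$, so $\nu = p^k-1$ does not even divide $p^{a-1}-1$, and by Key Lemma \ref{L:key}(2) the minimizer equality fails. The wrap-around term $qp^{k-1}-1 = p^{a+k-1}-1$ has the same defect. The object that actually gives $t(\cC) \ge k$ is the \emph{identity} permutation on $\{1, p, \ldots, p^{k-1}\}$: Proposition \ref{P:3.10} (with $\ell = 1$, $e = 0$) shows each $(\id, \{p^r\})$ is a minimizer for $r = 0, \ldots, k-1$, and Proposition \ref{P:hereditary} assembles these into a single minimizer of height $k$. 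The same correction applies to your part (1) sketch: the $(k-e)$ elements $\ell, p\ell, \ldots, p^{k-e-1}\ell$ (not the shifts of $w$) each sit in $\Ib$ via identity singletons, not through any cycle.

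Your upper-bound route for part (2) --- assembling a cycle into $\nu * w = (p^{am}-1)\ell$ and reading off $\ell \in \{1, p, \ldots, p^{k-1}\}$ --- can be made to work, but Proposition \ref{P:pk-1-case} cannot be cited as-is: its $M_\cC$ refers to the coin set with exponents $< a$, not $< am$, so you would have to rerun its proof from the carry-free factorization (which you do possess). The paper takes a shorter path: if $\sigma \ne \id$, pick $\ell \in \cI$ with $\sigma\ell < \ell$; then $\nu \mid (q-1)$ forces $\nu \mid (\ell - \sigma\ell) > 0$, hence $\ell > \nu$ and $q\ell - \sigma\ell > (q-1)\nu$, contradicting Key Lemma \ref{L:key}(2). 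Thus every minimizer has $\sigma = \id$, and together with Proposition \ref{P:pk-1-case} this identifies $\Ib = \{1, p, \ldots, p^{k-1}\}$ exactly.
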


\begin{proof}
(1) 
One only needs to observe from Proposition \ref{P:3.10}
that under the hypothesis, 
$\ell,\ell p,\ldots,\ell p^{k-e-1}$ all lie in $\Ib$. Thus $t(\cC)\ge k-e$. 
Proposition \ref{P:bounded_min}(1) shows that  $t(\cC)=\#\Ib\le \nu$. 
\\
(2)
By Proposition \ref{P:pk-1-case}, we see that $(\cI,\id)\in\Sb$ 
implies that $\cI\subseteq \{1,p,\ldots,p^{k-1}\}$.
Suppose $\sigma\ne \id$.
We shall prove that $(\cI,\sigma)\not\in\Sb$.
Choose $\ell\in \cI$ so that 
$\sigma\ell<\ell$,
and suppose 
$M_\cC(p^a\ell-\sigma\ell)=s_p(p^a\ell-\sigma\ell)/s_p(\nu)$.
Then $p^a\ell-\sigma\ell=\nu w$ for some $w$ by Key Lemma
\ref{L:key}. 
This implies $\nu| [(p^a-1)\ell+(\ell-\sigma\ell)]$, and so
$\nu|(\ell-\sigma\ell)$. Since $\ell-\sigma\ell\in\Z_{>0}$,
we then have $\ell>\nu$. 
Therefore $p^a\ell-\sigma\ell>(p^a-1)\ell>(p^a-1)\nu$. By Key Lemma \ref{L:key}, this 
implies $M_\cC(p^a\ell-\sigma\ell)>s_p(p^a\ell-\sigma\ell)/s_p(\nu)$,
contradicting the above hypothesis.
This proves that $(\cI,\sigma)$ is not in $\Sb$.
Thus $\Ib=\{1,p,\ldots,p^{k-1}\}$ and $t(\cC)=k$.
\end{proof}

\begin{remark*}
By Proposition \ref{P:k-e}, we always have $k-e\ge 1$,
so the lower bound in the above Proposition \ref{P:bounded_min2}(1)
is meaningful. 
The minimizer height $t(\cC)$ will be used in Section 
\ref{S:proof}
in determining the multiplicity of the first slope of the Newton polygon. 
\end{remark*}

\section{\texorpdfstring{$p$}{p}-adic estimates of characteristic series}
\label{S:estimate}

The purpose of this section is to prove
Corollary \ref{C:slope}.
It is the essential ingredient for the proofs of the main results
in Section \ref{S:proof}.

\subsection{\texorpdfstring{$p$}{p}-adic change-making and 
\texorpdfstring{$p$}{p}-adic analysis}
\label{S:dwork}

We shall apply change-making (Section \ref{S:change-making}) to the $p$-adic setting in Dwork theory.
First of all, we recall Dwork theory (\cite{Dw64}) following the spirit of \cite{Bom66}.
Let $q=p^a$ for some $a\ge 1$.
Let $\Q_q$ be the degree $a$ unramified extension over the $p$-adic
rational numbers $\Q_p$, and let $\Z_q$ be its ring of integers.
Let $\Omega_1 = \Q_p(\zeta_p)$
where $\zeta_p$ is any fixed primitive $p$-th root of unity,
and let $\Omega_a=\Q_q(\zeta_p)$. For each $z
\in \F_q$, we denote $\hat{z}$
to be the Teichm\"uller lifting in $\Z_q$ of $z$.  
Additionally, let $\tau : \Omega_a \to \Omega_a$ be the
lift of the Frobenius endomorphism of $\F_q$ which fixes $\Omega_1$.
In particular, we have $\tau(\hat{z})=\hat{z}^p$.
Let
$E(x) = \exp \left(\sum_{i=0}^{\infty}\frac{x^{p^i}}{p^i}\right)$ be the 
    $p$-adic Artin-Hasse exponential function, which lies in $\Z_p[[x]]$.
    Pick and fix a root $\gamma\in\Omega_1$ of $\log E(x)$
satisfying $v_p(\gamma) = 1/(p-1)$. 
Since $\gamma$ is a uniformizer, and $\Z_q[\zeta_p]=\Z_q[\gamma]$, 
our $p$-adic estimate is conveniently reduced to a $\gamma$-adic estimate.

Fix a polynomial $f(x)=\sum_{1\le i\le d,p\nmid i }a_i x^i\in \F_q[x]$.
Let 
$F(x)\coloneqq \prod_{i=1}^{d} E(\gamma\hat a_i x^i )$. 
Let 
$F_0(x) \coloneqq\prod_{j=0}^{a-1} \tau^{j}(F(x^{p^j}))$.
Notice that $F_0(x)$ lies in the following $\Omega_a$-Banach algebra:
$$
\sL=\left\{\sum_{n=0}^\infty A_n x^n\in \Omega_a[[x]]\;|\;
v_p(A_n)\ge \frac{pn}{dq(p-1)}+O(1)\right\}. 
$$

Set $I\coloneqq\Supp(f)\times \{0,1,\ldots,a-1\}.$ 
We consider the $p$-adic change-making problem, introduced in Section \ref{S:change-making},  
with the following coin set:
\begin{equation}\label{E:coin-set2}
\fbox{$\cC=\cC(\Supp(f),a)=\{ip^j\mid (i,j)\in I\}=\{ip^j\mid i\in\Supp(f), j=0,\ldots,a-1\}.$}
\end{equation}
For the readers' convenience, we recall some notation from 
Section \ref{S:change-making}:
For every integer $N\ge 0$, 
$\cT(N)$ is the representation set defined in \eqref{E:I(N)},
and $M_\cC(N)$ is the solution value to the $p$-adic change-making 
problem.
Because $\cC$ satisfies the same structure provided in
Section \ref{S:change-making},
Key Lemma \ref{L:key} applies.
We will immediately make use of this in the lemma that follows.

\begin{lemma}
\label{L:G_N1}
Write
$E(x)=\sum_{n=0}^{\infty} \beta_n x^n$ and 
$F_0(x)=\sum_{N=0}^{\infty} G_N x^N$.
\begin{enumerate}
\item Then 
$
G_N = \sum_{\bt \in \cT(N)}\kappa(\bt) \gamma^{|\bt|},
$
where 
$
\kappa(\bt)= \prod_{(i,j)\in I}\beta_{t_{ij}}\hat a_i^{p^j t_{ij}}.
$
\item Suppose $\nu$ is a maximal $p$-adic weight element in $\Supp(f)$.
Then 
$$v_p(G_N)
\ge \frac{M_\cC(N)}{p-1}
\ge \frac{s_p(N)}{(p-1)s_p(\nu)}.$$ 
If $\nu$ is the unique maximal $p$-adic weight element and the second inequality is an equality, then 
$v_p(G_N)=\frac{s_p(N)}{(p-1)s_p(\nu)}$.
\end{enumerate}

\end{lemma}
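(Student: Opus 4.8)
The plan is to derive (1) by expanding the finite product that defines $F_0(x)$, and then to read off (2) from (1) together with Key Lemma \ref{L:key} and the elementary arithmetic of the Artin--Hasse exponential. For (1), I would first unwind the twists $\tau^j$: since $\gamma\in\Omega_1$ is fixed by $\tau$, the coefficients $\beta_n$ of $E$ lie in $\Z_p\subset\Omega_1$ and are $\tau$-fixed, and $\tau(\hat a_i)=\hat a_i^p$, so $\tau^j\bigl(F(x^{p^j})\bigr)=\prod_{i=1}^{d}E(\gamma\hat a_i^{p^j}x^{ip^j})$, in which the factors with $i\notin\Supp(f)$ are $E(0)=1$. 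Multiplying over $0\le j\le a-1$ gives
\[
F_0(x)=\prod_{(i,j)\in I}E(\gamma\hat a_i^{p^j}x^{ip^j})=\prod_{(i,j)\in I}\Bigl(\sum_{t_{ij}\ge 0}\beta_{t_{ij}}\gamma^{t_{ij}}\hat a_i^{p^jt_{ij}}x^{(ip^j)t_{ij}}\Bigr),
\]
a finite product of power series, each with constant term $\beta_0=1$. Extracting the coefficient of $x^N$ amounts to summing over tuples $(t_{ij})_{(i,j)\in I}$ with $\sum_{(i,j)\in I}t_{ij}(ip^j)=N$, i.e.\ over the finite set $\cT(N)$, which produces exactly $G_N=\sum_{\bt\in\cT(N)}\gamma^{|\bt|}\kappa(\bt)$ with $\kappa(\bt)=\prod_{(i,j)\in I}\beta_{t_{ij}}\hat a_i^{p^jt_{ij}}$.

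For (2), the first step is to observe $v_p(\kappa(\bt))\ge 0$ for every $\bt\in\cT(N)$: each $\hat a_i$ with $i\in\Supp(f)$ is a Teichm\"uller unit so $v_p(\hat a_i)=0$, and $E(x)\in\Z_p[[x]]$ gives $v_p(\beta_n)\ge 0$. As $v_p(\gamma)=1/(p-1)$, every term of $G_N$ has valuation $\ge|\bt|/(p-1)\ge M_\cC(N)/(p-1)$, so the ultrametric inequality yields $v_p(G_N)\ge M_\cC(N)/(p-1)$, and $M_\cC(N)\ge s_p(N)/s_p(\nu)$ is Key Lemma \ref{L:key}(1). For the equality clause I would assume $\nu$ is the unique maximal $p$-adic weight element of $\Supp(f)$ and $M_\cC(N)=s_p(N)/s_p(\nu)$; then Key Lemma \ref{L:key}(2) gives a unique $\bt^\ast\in\cT(N)$ with $|\bt^\ast|=M_\cC(N)$, with $t^\ast_{ij}=0$ for $i\ne\nu$ and $t^\ast_{\nu j}\in\{0,\dots,p-1\}$ (the $p$-adic digits of some $w<q$). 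Every other $\bt\in\cT(N)$ has $|\bt|\ge M_\cC(N)+1$, hence its term has valuation $\ge M_\cC(N)/(p-1)+1/(p-1)$, strictly above $M_\cC(N)/(p-1)$; while the $\bt^\ast$ term has valuation $M_\cC(N)/(p-1)+v_p(\kappa(\bt^\ast))$, and $\kappa(\bt^\ast)=\prod_j\beta_{t^\ast_{\nu j}}\hat a_\nu^{p^jt^\ast_{\nu j}}$ is a $p$-adic unit because $\hat a_\nu\in\Z_q^\times$ and $\beta_n=1/n!$ is a unit for $0\le n\le p-1$. So the smallest term valuation is attained only by $\bt^\ast$, no cancellation occurs, and $v_p(G_N)=M_\cC(N)/(p-1)=s_p(N)/\bigl((p-1)s_p(\nu)\bigr)$.

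The hard part will be the equality clause of (2): it rests on knowing that the unique minimal representation never uses a coin $\nu p^j$ more than $p-1$ times, so that the attached Artin--Hasse coefficients $\beta_{t^\ast_{\nu j}}$ are units and $\kappa(\bt^\ast)$ cannot inflate the valuation. This is exactly the structural information that Key Lemma \ref{L:key}(2) provides --- a bare valuation bound would leave open the possibility $t^\ast_{\nu j}\ge p$ with $\beta_{t^\ast_{\nu j}}$ a non-unit. Everything else is routine: the ultrametric inequality plus the standard facts $E(x)\in\Z_p[[x]]$, $\beta_n=1/n!$ for $n<p$, $v_p(\gamma)=1/(p-1)$, and $v_p(\hat a_i)=0$ for $i\in\Supp(f)$.
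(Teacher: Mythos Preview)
Your proposal is correct and follows essentially the same route as the paper's own proof: expand $F_0$ as the product $\prod_{(i,j)\in I}E(\gamma\hat a_i^{p^j}x^{ip^j})$ and extract coefficients for (1), then for (2) use $v_p(\kappa(\bt))\ge 0$ for the inequality and invoke Key Lemma~\ref{L:key}(2) to isolate a unique minimal $\bt^\ast$ whose entries lie in $\{0,\dots,p-1\}$, forcing $\kappa(\bt^\ast)$ to be a unit. Your write-up is in fact slightly more explicit than the paper's in two places---you spell out why $\tau^j$ acts as it does on each factor, and you justify $v_p(\beta_n)=0$ for $n<p$ via $\beta_n=1/n!$---but the argument is the same.
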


\begin{proof}
(1) By definition, 
\[
    F_0(x) = \prod_{j=0}^{a-1}\prod_{i\in\Supp(f)}
    E(\gamma \hat{a}_i x^{ip^j})^{\tau^j} =
    \prod_{(i,j)\in I} E(\gamma \hat a_i^{p^j} x^{ip^j}).
\]
Since each factor has expansion
$E(\gamma \hat a_i^{p^j} x^{ip^j})=
\sum_{t_{ij}=0}^{\infty} ( \beta_{t_{ij}}\hat a_i^{p^jt_{ij}}\gamma^{t_{ij}})x^{ip^jt_{ij}},
$
the $x^N$-coefficient $G_N$ of the product is
$
    \sum 
    \left( \prod_{(i,j) \in I} \beta_{t_{ij}}\hat a_i^{p^j t_{i,j}}\right)
    \gamma^{|\bt|},
$
where the sum ranges over all $\bt\in\cT(N)$. 

(2) The second inequality follows from Key Lemma \ref{L:key}.
Since $v_p(\kappa(\bt))\ge 0$ and $v_p(\gamma) = \frac{1}{p-1}$,
we have $v_p(G_N)\ge \frac{1}{p-1}\min_{\bt\in \cT(N)}|\bt|=\frac{M_{\cC}(N)}{p-1}$. This proves the first inequality. 
By Key Lemma \ref{L:key}, if $\nu$ is the unique maximal element
and the second equality holds, 
then there is a unique solution $\bt=(t_{ij})\in \cT(N)$ with minimal $|\bt|=M_\cC(N)$.
This yields a unique lowest $\gamma$-power term in Part (1)'s expression of $G_N$,
that is, $\kappa(\bt)\gamma^{M_{\cC}(N)}$. 
Moreover, since $t_{ij}\in\{0,1,\ldots,p-1\}$, we have $\beta_{t_{ij}}=\frac{1}{t_{ij}!}$ hence 
$v_p(\beta_{t_{ij}})=0$.
This implies $v_p(\kappa(\bt))=0$. Therefore,
$v_p(G_N)=\frac{M_\cC(N)}{p-1}$.
Combined with the hypothesis that $M_\cC(N)=\frac{s_p(N)}{s_p(\nu)}$, we have 
$v_p(G_N)=\frac{s_p(N)}{(p-1)s_p(\nu)}$.
\end{proof}

For our result below, we recall  the following notation from Section \ref{S:4.3}:
For every $m\ge 1$, $\bSig_m$ is the set of all pairs $(\cI, \sigma)$
where $\cI$ is a set of $m$ prime-to-$p$ positive integers and $\sigma$ 
is a permutation of $\cI$. A pair $(\cI,\sigma)$ is a minimizer if 
the change-making solution value 
$M_\cC(p^a\ell-\sigma\ell)$ achieves its minimum 
$s_p(p^a\ell-\sigma\ell)/s_p(\nu)$
for every $\ell\in \cI$ at the same time. 

\begin{lemma}
\label{L:G_N}
Let $G_N$ be as in Lemma \ref{L:G_N1}.
Suppose $\nu$ is a maximal $p$-adic weight element in $\Supp(f)$.
For any $m\ge 1$, if $(\cI,\sigma)\in \bSig_m$, then  
    $
        v_p\left(\prod_{\ell\in \cI} G_{q\ell - \sigma \ell}
            \right) \ge \frac{am}{s_p(\nu)}.
    $
If $\nu$ is the unique maximal $p$-adic weight element in $\Supp(f)$ and
$\nu<q$, then 
$(\cI,\sigma)$ is a minimizer if and only if  
$v_p\left(\prod_{\ell\in \cI} G_{q\ell - \sigma \ell}
            \right) = \frac{am}{s_p(\nu)}$.
\end{lemma}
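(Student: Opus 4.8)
The plan is to combine Lemma \ref{L:G_N1}(2) with the minimizer theory of Section \ref{S:4.3}. First, for the inequality: apply Lemma \ref{L:G_N1}(2) to each $N = q\ell - \sigma\ell$, which gives $v_p(G_{q\ell-\sigma\ell}) \ge \frac{s_p(q\ell-\sigma\ell)}{(p-1)s_p(\nu)}$. Summing over $\ell \in \cI$ and using additivity of $v_p$ over products,
\[
    v_p\!\left(\prod_{\ell\in\cI} G_{q\ell-\sigma\ell}\right)
    \ge \frac{1}{(p-1)s_p(\nu)}\sum_{\ell\in\cI} s_p(q\ell - \sigma\ell).
\]
So it remains to show $\sum_{\ell\in\cI} s_p(q\ell-\sigma\ell) \ge am(p-1)$. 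The natural way is to note that $\sigma$ permutes $\cI$, so $\sum_{\ell\in\cI}(q\ell - \sigma\ell) = (q-1)\sum_{\ell\in\cI}\ell$, and more importantly each summand $q\ell - \sigma\ell$ is a positive multiple of... no — rather, I would argue digit-by-digit: telescoping around each cycle of $\sigma$, one sees $\sum q\sigma^r\ell - \sigma^{r+1}\ell = (q^m - 1)\ell$ over an $m$-cycle. By the remark following Proposition \ref{lem:sp_qi-j} (the case $i=j$), $s_p((q^m-1)\ell') = am(p-1)$ whenever $\ell' < q^m$, but here $\ell'$ can exceed $q^m$; still, the triangle inequality Proposition \ref{P:2.1} applied repeatedly gives $\sum_{r} s_p(q\sigma^r\ell - \sigma^{r+1}\ell) \ge s_p\big(\sum_r (q\sigma^r\ell - \sigma^{r+1}\ell)\big) = s_p((q^m-1)\ell) \ge am(p-1)$, where the last bound again uses Proposition \ref{lem:sp_qi-j} (with $i = \ell+1$... ) — actually more simply, $s_p((q^m-1)\ell) \ge s_p(q^m - 1) = am(p-1)$ by the triangle inequality since $(q^m-1)\ell = (q^m-1) + (q^m-1)(\ell-1)$. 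Summing over the cycles of $\sigma$ gives the claimed bound, establishing the inequality.

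For the equivalence (under the hypotheses that $\nu$ is the unique maximal $p$-adic weight element and $\nu < q$): the forward direction is the content of the definition of minimizer combined with Lemma \ref{L:G_N1}(2). If $(\sigma,\cI)$ is a minimizer, then $M_\cC(q\ell-\sigma\ell) = \frac{s_p(q\ell-\sigma\ell)}{s_p(\nu)}$ for each $\ell$, so the second equality in Lemma \ref{L:G_N1}(2) holds, giving $v_p(G_{q\ell-\sigma\ell}) = \frac{s_p(q\ell-\sigma\ell)}{(p-1)s_p(\nu)}$ exactly. Then I need $\sum_{\ell\in\cI} s_p(q\ell-\sigma\ell) = am(p-1)$ — i.e., equality in the digit estimate above. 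This should follow because being a minimizer forces, via Key Lemma \ref{L:key}, a carry-free factorization $q\ell-\sigma\ell = \nu * w_\ell$ with $w_\ell < q$; chaining these around a cycle as in the proof of Proposition \ref{P:uniq-perm}(2) yields a carry-free factorization $\nu * w = (q^m-1)\ell$ with $\ell < q^m$, whence $s_p((q^m-1)\ell) = am(p-1)$ by the remark after Proposition \ref{lem:sp_qi-j}, and by carry-freeness (Proposition \ref{P:2.1} equality case) the sum of the $s_p(q\sigma^r\ell - \sigma^{r+1}\ell)$ equals $s_p((q^m-1)\ell)$. Summing over cycles gives exactly $am/s_p(\nu)$ for the valuation.

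For the reverse direction, suppose $v_p\big(\prod_{\ell\in\cI} G_{q\ell-\sigma\ell}\big) = \frac{am}{s_p(\nu)}$. From the per-term lower bound $v_p(G_{q\ell-\sigma\ell}) \ge \frac{s_p(q\ell-\sigma\ell)}{(p-1)s_p(\nu)} \ge \frac{M_\cC(q\ell-\sigma\ell)}{p-1} \cdot \frac{1}{1}$ — more precisely $v_p(G_N) \ge \frac{M_\cC(N)}{p-1} \ge \frac{s_p(N)}{(p-1)s_p(\nu)}$ — together with the fact that $\sum_\ell \frac{s_p(q\ell-\sigma\ell)}{(p-1)s_p(\nu)} \ge \frac{am}{s_p(\nu)}$ established in the first paragraph, equality of the total forces equality in every inequality: in particular $v_p(G_{q\ell-\sigma\ell}) = \frac{s_p(q\ell-\sigma\ell)}{(p-1)s_p(\nu)}$ and $M_\cC(q\ell-\sigma\ell) = \frac{s_p(q\ell-\sigma\ell)}{s_p(\nu)}$ for every $\ell\in\cI$, which is precisely the statement that $(\sigma,\cI)$ is a minimizer.

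The main obstacle I anticipate is the bookkeeping in showing $\sum_{\ell\in\cI} s_p(q\ell-\sigma\ell) \ge am(p-1)$ cleanly — i.e., making the cycle-telescoping rigorous and correctly tracking that $\sum_{\text{cycles}} (\text{cycle length}) = m$, so that the bound aggregates to $am(p-1)$. The hypothesis $\nu < q$ is used (as in Proposition \ref{P:uniq-perm}(2)) to guarantee $\ell < q^m$ in the telescoped factorization, which is what makes $s_p((q^m-1)\ell)$ exactly $am(p-1)$ rather than merely $\ge$; without it one only gets the inequality, not the equivalence. I would also double-check that the uniqueness hypothesis on $\nu$ is exactly what licenses invoking the second sentence of Lemma \ref{L:G_N1}(2) (the ``if the second equality holds then $v_p(G_N)$ equals'' clause), so that minimizer-ness can be read off from the $\gamma$-adic valuation term-by-term.
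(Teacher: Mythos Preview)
Your overall structure is right and matches the paper: reduce to Lemma~\ref{L:G_N1}(2), then establish the digit estimate $\sum_{\ell\in\cI} s_p(q\ell-\sigma\ell)\ge am(p-1)$; your treatment of the ``equality $\Rightarrow$ minimizer'' direction is essentially the paper's argument. However, your route to the digit estimate contains two concrete errors. First, the unweighted sum $\sum_r (q\sigma^r\ell - \sigma^{r+1}\ell)$ over an $m$-cycle equals $(q-1)\sum_r \sigma^r\ell$, not $(q^m-1)\ell$; to land on $(q^m-1)\ell$ you must use the $q$-weighted telescoping $\sum_r q^{m-1-r}(q\sigma^r\ell-\sigma^{r+1}\ell)$, after which the triangle inequality does apply since $s_p(q^rN)=s_p(N)$. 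Second, your ``more simply'' justification of $s_p((q^m-1)\ell)\ge s_p(q^m-1)$ via the triangle inequality is backwards---Proposition~\ref{P:2.1} gives $s_p(A+B)\le s_p(A)+s_p(B)$, not $\ge$. The bound you want is precisely Proposition~\ref{lem:sp_qi-j} with $i=j=\ell$ (the approach you began and abandoned).

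The paper bypasses the cycle-telescoping entirely by applying Proposition~\ref{lem:sp_qi-j} termwise: $s_p(q\ell-\sigma\ell)\ge a(p-1)+s_p(\ell-1)-s_p(\sigma\ell-1)$ for each $\ell$, and since $\sigma$ permutes $\cI$ the cross terms cancel upon summation, yielding $am(p-1)$ in one line. This also cleans up your ``minimizer $\Rightarrow$ equality'' direction: a minimizer has $\cI\subseteq\{1,\dots,\nu\}$ by Proposition~\ref{P:bounded_min}(1), so $\sigma\ell\le\nu<q=p^a$ and the \emph{equality} case of Proposition~\ref{lem:sp_qi-j} applies to every term, giving $\sum_\ell s_p(q\ell-\sigma\ell)=am(p-1)$ directly. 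Your chaining argument via Proposition~\ref{P:uniq-perm}(2) can be made to work, but verifying that the concatenated product $\nu*w$ is carry-free ultimately requires exactly this per-term identity, so it is a detour rather than a shortcut.
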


\begin{proof}
If the product contains $G_{q\ell-\sigma \ell}$ with $q\ell-\sigma\ell<0$
then the product equals $0$, and the first statement always holds; 
since such a $(\cI,\sigma)$ is not a minimizer, the second statement holds vacuously. 
From now on we assume all sub-indices in the product we consider 
are positive.

By Lemma \ref{L:G_N1}(2) we have 
   \begin{equation*}
        \sum_{\ell\in \cI} v_p(G_{q\ell- \sigma \ell}) 
                \ge \sum_{\ell\in \cI} \frac{M_\cC(q\ell - \sigma \ell)}{p-1} 
                \ge 
                \frac{\sum_{\ell\in \cI} s_p(q\ell - \sigma \ell)}{(p-1)s_p(\nu)}.
    \end{equation*}
By Proposition \ref{lem:sp_qi-j}, 
\begin{equation*}
\sum_{\ell\in \cI} s_p(q\ell - \sigma \ell)
\ge 
\sum_{\ell\in \cI} (a(p-1)+s_p(\ell -1 ) -s_p(\sigma(\ell)-1))  
=am(p-1).
\end{equation*}
Combining the above two inequalities, we obtain the desired inequality.

It remains to prove the second statement.
First we prove the `if' direction.
By Lemma \ref{L:G_N1}(2) again, 
the equality 
$$
v_p(\prod_{\ell\in \cI}G_{q\ell-\sigma\ell})=\frac{am}{s_p(\nu)}
$$
implies that
\[
    v_p(G_{q\ell - \sigma\ell})=
    \frac{M_\cC(q\ell - \sigma\ell)}{p-1}
    =
    \frac{s_p(q\ell-\sigma\ell)}{(p-1)s_p(\nu)}
\]
for all $\ell\in \cI$.
Therefore
$M_\cC(q\ell - \sigma \ell) = \frac{s_p(q\ell - \sigma \ell)}{s_p(\nu)}$
for all $\ell \in \cI$; i.e., $(\cI,\sigma)$ is a minimizer.

To show the `only if' direction:
Suppose $(\cI,\sigma)$ is a minimizer. This implies by Lemma \ref{L:G_N1}(2)
that $v_p(G_{q\ell-\sigma\ell})=\frac{s_p(q\ell-\sigma\ell)}{(p-1)s_p(\nu)}$ 
for every $\ell\in \cI$. Since  $\ell,\sigma\ell\le \nu<q$, then by Proposition \ref{lem:sp_qi-j}
we have $s_p(q\ell-\sigma\ell)
=a(p-1)+s_p(\ell-1)-s_p(\sigma\ell-1)$.
Summing these up over $\ell\in \cI$
we have
\[
    v_p\left(\prod_{\ell\in \cI}G_{q\ell-\sigma\ell}\right)
    =\frac{\sum_{\ell\in \cI}(a(p-1)+s_p(\ell-1)-s_p(\sigma\ell-1))}
    {(p-1)s_p(\nu)}=\frac{am}{s_p(\nu)}.
\]
\end{proof}

\subsection{Fredholm determinant of Dwork operator}\label{S:series}

Let the Dwork operator $\alpha: \sL\longrightarrow \sL$ be defined by $\alpha\coloneqq \Phi_q\circ F_0(x)$ where
$    \Phi_q \left(\sum_i A_i x^i\right) = \sum_i A_{qi}x^i$
and $\alpha(\sum_i A_i x^i)= \Phi_q(F_0(x)\cdot \sum_{i}A_i x^i)$
for any $\sum_i A_i x^i\in \sL$. Then $\alpha$ is a 
$\Omega_a$-linear completely continuous operator on $\sL$.

Let $\alpha^\triangle$ denote the restriction of $\alpha$ to the Banach subspace 
$x\sL$, and write $\cB=\{1,x,x^2,\dots\}$ and $\cB^\triangle=\{x,x^2,\dots\}$ as
formal bases for $\sL$ and $x\sL$, respectively.
Let $\Mat(\alpha^\triangle)$ be the matrix of $\alpha^\triangle$
with respect to the basis $\cB^\triangle$;
that is, $\Mat(\alpha^\triangle)=(G_{qi-j})_{i,j\ge 1}$,
where $G_{qi-j}$'s 
are coefficients of $F_0$ in Lemma \ref{L:G_N1}.
We have the following Fredholm determinant:  

\begin{equation}\label{E:C^*}
C^\triangle(f/\F_q,s)\coloneqq\det(1-s\Mat(\alpha^\triangle))=1+C_1s+C_2 s^2+\cdots
\end{equation}
where each coefficient is a sum over all $\bSig_m$
(see Definition \ref{D:minimizer}) as follows:
\begin{equation}
\label{E:C_m} 
C_m=\sum_{(\cI,\sigma)\in \bSig_m}(-1)^m\sgn(\sigma)\prod_{\ell\in \cI}G_{q\ell-\sigma\ell}.   
\end{equation}
Below we apply our result from Section \ref{S:symmetric} (in particular, Propositions \ref{P:uniq-perm}
and \ref{P:bounded_min2}) to explicit $p$-adic estimates.

\begin{proposition}\label{P:bound}
Let $\cC=\cC(\Supp(f),a)$ be the coin set as in \eqref{E:coin-set2}. 
\begin{enumerate}
\item 
Let $\nu$ be a maximal $p$-adic weight element in $\Supp(f)$. Then     
$
        v_p(C_m) \ge \frac{am}{s_p(\nu)}
    $ for all $m\ge 1$.   
\item 
Suppose $\nu$ is the unique maximal $p$-adic weight element in $\Supp(f)$ and $\nu<q$.
\begin{enumerate}
\item If $\nu$ is $p$-symmetric with minimal factorization
    $\nu * w = (p^k-1)\ell$ and $k|a$,
then $\Sb\neq \varnothing$.
Let $t \coloneqq t(\cC)$
be the minimizer height of $\cC$,
and let $e$ denote the shift factor of $\nu$.
Then we have $k-e\le t\le \nu$, $v_p(C_t)=\frac{at}{s_p(\nu)}$, and 
$v_p(C_m)>\frac{am}{s_p(\nu)}$ for all $m>t$.
\item 
Conversely, if $v_p(C_m)=\frac{am}{s_p(\nu)}$ for some $m\ge 1$, then 
$\nu$ is $p$-symmetric.    
\end{enumerate}
\item Suppose $p^k-1$ with $k|a$ is the unique maximal $p$-adic weight element in $\Supp(f)$.
Then $v_p(C_m) \ge \frac{am}{k(p-1)}$ for all $m\ge 1$,
$v_p(C_k)=\frac{a}{p-1}$, 
and $v_p(C_m)>\frac{am}{k(p-1)}$ for all $m>k$.
\end{enumerate}
\end{proposition}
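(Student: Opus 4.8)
\textbf{Proof plan for Proposition \ref{P:bound}.}

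The plan is to extract the valuations of the coefficients $C_m$ from the formula \eqref{E:C_m}, $C_m=\sum_{(\sigma,\cI)\in\bSig_m}(-1)^m\sgn(\sigma)\prod_{\ell\in\cI}G_{q\ell-\sigma\ell}$, by combining the $\gamma$-adic estimates on each product $\prod_{\ell\in\cI}G_{q\ell-\sigma\ell}$ from Lemma \ref{L:G_N} with the structural facts about minimizers proved in Section \ref{S:4.3}. For part (1), Lemma \ref{L:G_N} gives $v_p(\prod_{\ell\in\cI}G_{q\ell-\sigma\ell})\ge \frac{am}{s_p(\nu)}$ for \emph{every} $(\sigma,\cI)\in\bSig_m$, so the ultrametric inequality applied term-by-term in \eqref{E:C_m} yields $v_p(C_m)\ge\frac{am}{s_p(\nu)}$ immediately; there is no obstacle here.

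For part (2a), first apply Proposition \ref{P:uniq-perm}(1) to conclude $\Sb\neq\varnothing$, so the minimizer height $t=t(\cC)$ is defined, and Proposition \ref{P:bounded_min2}(1) gives the bounds $k-e\le t\le\nu$. The inequality $v_p(C_m)\ge\frac{am}{s_p(\nu)}$ from part (1) holds for all $m$, so the content is to show this is an equality exactly when $m=t$ and strict when $m>t$. The key point is that, by Lemma \ref{L:G_N}, for $(\sigma,\cI)\in\bSig_m$ with $\nu<q$ we have $v_p(\prod_{\ell\in\cI}G_{q\ell-\sigma\ell})=\frac{am}{s_p(\nu)}$ if and only if $(\sigma,\cI)$ is a minimizer, and otherwise the valuation is strictly larger. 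For $m>t$: by Proposition \ref{P:bounded_min}, any minimizer $(\sigma,\cI)$ has $\#\cI\le t<m$, so there are no minimizers in $\bSig_m$, every term in \eqref{E:C_m} has valuation $>\frac{am}{s_p(\nu)}$, and hence $v_p(C_m)>\frac{am}{s_p(\nu)}$. For $m=t$: the unique maximal minimizer $(\sbb,\Ib)$ has $\#\Ib=t$, so it contributes a term of valuation exactly $\frac{at}{s_p(\nu)}$ to $C_t$; I must check this term is not cancelled. Here the main obstacle is the cancellation issue: I claim $(\sbb,\Ib)$ is the \emph{only} minimizer in $\bSig_t$, because by Proposition \ref{P:bounded_min}(2) any minimizer in $\bSig_t$ would be $\le(\sbb,\Ib)$ in the partial order and thus equal to it by the size constraint $\#\cI=t=\#\Ib$; so $(\sbb,\Ib)$ is the unique term of minimal valuation in \eqref{E:C_m} for $m=t$ and cannot cancel, giving $v_p(C_t)=\frac{at}{s_p(\nu)}$.

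For part (2b): if $v_p(C_m)=\frac{am}{s_p(\nu)}$ for some $m$, then since part (1) shows every term in \eqref{E:C_m} has valuation $\ge\frac{am}{s_p(\nu)}$, some term must achieve equality, i.e.\ there exists $(\sigma,\cI)\in\bSig_m$ with $v_p(\prod_{\ell\in\cI}G_{q\ell-\sigma\ell})=\frac{am}{s_p(\nu)}$; by Lemma \ref{L:G_N} (using $\nu<q$) this $(\sigma,\cI)$ is a minimizer, so $\Sb\neq\varnothing$, and then Proposition \ref{P:uniq-perm}(2) forces $\nu$ to be $p$-symmetric. Finally, part (3) is the specialization $\nu=p^k-1$, $s_p(\nu)=k(p-1)$: it is automatically $p$-symmetric (Example \ref{ex:p-symmetry}(1)) with $\nu*1=(p^k-1)\cdot 1$, shift factor $e=0$, and by Proposition \ref{P:bounded_min2}(2) the minimizer height is $t(\cC)=k$; substituting into part (2a) gives $v_p(C_m)\ge\frac{am}{k(p-1)}$ for all $m$, $v_p(C_k)=\frac{ak}{k(p-1)}=\frac{a}{p-1}$, and $v_p(C_m)>\frac{am}{k(p-1)}$ for $m>k$, as claimed. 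The only delicate step throughout is ruling out accidental cancellation at $m=t$, which the uniqueness of the maximal minimizer resolves.
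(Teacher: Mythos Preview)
Your proposal is correct and follows essentially the same approach as the paper: both arguments estimate each term of \eqref{E:C_m} via Lemma \ref{L:G_N}, invoke Propositions \ref{P:uniq-perm} and \ref{P:bounded_min2} for the structural facts about $\Sb$ and $t(\cC)$, and resolve the cancellation issue at $m=t$ by the uniqueness of the maximal minimizer $(\sbb,\Ib)$ among minimizers of size $t$. Your explicit use of the partial order to justify that any size-$t$ minimizer $\le(\sbb,\Ib)$ must equal $(\sbb,\Ib)$ is exactly the point the paper is using when it asserts that any $(\sigma,\cI)\ne(\sbb,\Ib)$ in $\bSig_t$ fails to be a minimizer.
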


\begin{proof}
(1) By Lemma \ref{L:G_N},
$ v_p(\prod_{\ell\in \cI}
    G_{q\ell - \sigma \ell})\ge \frac{am}{s_p(\nu)}
$ for every $(\cI,\sigma)\in \bSig_m$.
So their sum still has 
$v_p(C_m) \ge \min_{(\cI,\sigma)\in\bSig_m} v_p(\prod_{\ell\in \cI}
    G_{q\ell - \sigma \ell})\ge \frac{am}{s_p(\nu)}.$
\\
(2a)
$\Sb \ne \varnothing$ immediately follows from
Proposition \ref{P:uniq-perm}(1),
and the inequality $k-e \le t \le \nu$ immediately follows from
Proposition \ref{P:bounded_min2}(1).

By \eqref{E:C_m}, we have 
$
C_t=C_{t,-} + C_{t,+}
$
where 
\begin{equation*}
    C_{t,+}= 
    \sum_{\stackrel{(\cI,\sigma)\in \bSig_t}{(\cI,\sigma)\ne (\Ib,\sbb)}}
    (\pm)\prod_{\ell\in \cI} G_{q \ell - \sigma\ell},\qquad
C_{t,-}=\pm \prod_{\ell\in \Ib}G_{q\ell-\sbb\ell}.
\end{equation*}
To prove that $v_p(C_t)=\frac{at}{s_p(\nu)}$, it suffices to show
the inequality
$v_p(C_{t,+})>\frac{at}{s_p(\nu)}$
and the equality
$v_p(C_{t,-})=\frac{at}{s_p(\nu)}$.   

Let  $(\cI,\sigma)\in \Sigma_t$ and $(\cI,\sigma)\neq (\Ib,\sbb)$.
Since $(\Ib,\sbb)$ is the unique maximal minimizer,
$(\cI,\sigma)$ is not a minimizer.
By Lemma \ref{L:G_N}, we have
 $
        v_p\left(\prod_{\ell \in \cI} G_{q\ell - \sigma\ell}\right)
            > \frac{at}{s_p(\nu)}
$,
hence
 $v_p(C_{t,+})>\frac{at}{s_p(\nu)}$.
On the other hand, by Lemma \ref{L:G_N}, since $(\Ib,\sbb)$ is a minimizer,
we have
$v_p(C_{t,-})=
v_p(\prod_{\ell\in \Ib}G_{q\ell-\sbb\ell}) = \frac{at}{s_p(\nu)}$.

Suppose $m>t$.
Then $\bSig_m$ contains no minimizers by the maximality of $(\Ib,\sbb)$, 
hence \eqref{E:C_m} and Lemma \ref{L:G_N} show that 
$v_p(C_m)>\frac{am}{s_p(\nu)}$.

\noindent 
(2b)
Suppose $v_p(C_m)=\frac{a m}{s_p(\nu)}$ for some $m\ge 1$.
From (\ref{E:C_m}) and Lemma \ref{L:G_N},
there must exist a pair $(\cI,\sigma)\in \bSig_m$ 
that is a minimizer.
By  Proposition \ref{P:uniq-perm}(2), since $\nu<q$, 
we conclude that $\nu$ is $p$-symmetric.    
\\
(3) 
Notice that  $\nu=p^k-1$ is $p$-symmetric.
The minimizer height $t(\cC)=k$ by Proposition \ref{P:bounded_min2}.
The statement follows from the above argument. 
\end{proof}

\begin{corollary}
\label{C:slope}
Let $\NP_q(C^\triangle(f/\F_q,s))$
denote the $q$-adic Newton polygon of the power series 
$C^\triangle(f/\F_q,s)$ in \eqref{E:C^*}. 
\begin{enumerate}
\item Then the first slope of  $\NP_q(C^\triangle(f/\F_q,s))$ 
is $\ge \frac{1}{\max_{i\in\Supp(f)}s_p(i)}$.
\item 
Suppose $\Supp(f)$ contains $\nu$ which is the unique element achieving the maximal $p$-adic weight and $\nu<q$.
\begin{enumerate}
\item 
If $\nu$ is a $p$-symmetric number with minimal factorization $\nu* w=(p^k-1)\ell$
and $k|a$, then the first slope of $\NP_q(C^\triangle(f/\F_q,s))$ 
equals $\frac{1}{s_p(\nu)}$. 
In this case, the multiplicity $t_1$ of 
this first slope is equal to the minimizer height $t(\cC(\Supp(f),a))$.
In particular, $k-e\le t_1\le \nu$, where
 $e$ is the shift factor of $\nu$.
\item 
Conversely, if $v_p(C_m)=\frac{am}{s_p(\nu)}$ for some $m\ge 1$, then 
$\nu$ is $p$-symmetric.
\end{enumerate}
\item Suppose $\nu=p^k-1$ with $k|a$ is the unique maximal $p$-adic weight 
element in $\Supp(f)$. Then the first slope of $\NP_q(C^\triangle(f/\F_q,s))$ is $\frac{1}{k(p-1)}$ of multiplicity $k$.
\end{enumerate}
\end{corollary}
\begin{proof}
The first slope of $\NP_q(C^\triangle(f/\F_q, s))$ is given by
$\delta \coloneqq \inf_{m \ge 1} \frac{v_q(C_m)}{m}$,
with the multiplicity given by the largest $t$ such that $\frac{v_q(C_t)}{t} =
\delta$.
Parts (1), (2), and (3) of this corollary now follow directly from
parts (1), (2), and (3), respectively, of Proposition
\ref{P:bound}.
\end{proof}

\section{Proof of the main theorem and its applications}
\label{S:proof}

\subsection{\texorpdfstring{$L$}{L}-functions of exponential sums and their Newton slope}

This section inherits all notation from Section \ref{S:estimate}.
We will additionally set $\zeta_p \coloneqq E(\gamma)$,
noting that it is a primitive $p$-th root of unity in $\bar\Q_p$.

Define the $m$-th exponential sum of $f(x)\in\F_q[x]$ as 
\[S_m(f)=\sum_{c\in\F_{q^m}}\zeta_p^{\Tr_{\F_{q^m}/\F_p}(f(c))}.\]
Then the $L$-function of the exponential sum of $f/\F_q$ is
\[
L(f,s)\coloneqq\exp\left(\sum_{m=1}^\infty S_m(f)\frac{s^m}{m}\right).
\]
Following Dwork theory (see \cite{Bom66}),
there is a relationship between $L(f, s)$ and the Fredholm determinant $\det(1-s\alpha)$.
In particular,
\[
    L(f, s) = \frac{\det(1-s \alpha)}{(1-s)\det(1-qs\alpha)}.
\]
Let $\NP_q$ denote the $q$-adic Newton polygon of a power series
and let $\NP_q^{<1}$ denote the section of the $q$-adic Newton polygon 
$\NP_q$ that has slope $<1$.
By Weil's theorem,
$L(f,s)$ is a polynomial in $\Z_p[\zeta_p][s]$ of degree $d-1$
with Newton slopes $<1$.
An elementary computation shows that 
\begin{equation*}
\NP_q(L(f,s))=\NP_q^{<1}(\det(1-s\Mat(\alpha^\triangle)))=\NP_q^{<1}(C^\triangle(f/\F_q,s)).
\end{equation*}
By \eqref{E:C^*}, we obtain the following: 
\begin{equation}
\label{E:L-function}
\NP_q(L(f,s))=\NP_q^{<1}(C^\triangle(f/\F_q,s))=\NP_q^{<1}(1+C_1s+C_2s^2+\cdots).
\end{equation}
Equivalently, $\NP_q(L(f,s))$ is equal to $\NP_q(1+C_1s+C_2s^2+\cdots)$
up to horizontal length $d-1$.

Recall the notion of $p$-symmetric numbers, minimal factorization, and shift factors
from Definitions \ref{def:p-symmetry} and \ref{D:symmetric_number}:
if $\nu$ is $p$-symmetric, it has unique minimal factorization
$\nu*w=(p^k-1)\ell$, and
the shift factor $e$ is equal to the number of $p$-adic digits of $w$ minus $1$.

\begin{theorem}    
\label{T:book_worm}
Suppose  $f=\sum_{i\ge 1,p\nmid i} a_i x^i \in \F_q[x]$.
Then $\NP_q(L(f, s))$
has its first slope $\ge \frac{1}{\max_{i\in \Supp(f)} s_p(i)}$. 
\begin{enumerate}
\item 
Suppose $\Supp(f)$ contains a
unique element $\nu$ achieving the maximal $p$-adic weight. Then
the first slope achieves the minimum $\frac{1}{s_p(\nu)}$
if and only if $\nu$ is a $p$-symmetric number.
In this case, the multiplicity $t_1$ of this first slope 
satisfies $k-e\le t_1\le \nu$.

\item If $\nu=p^k-1$ is the unique maximal $p$-adic weight element in $\Supp(f)$,
then the first slope of $\NP_q(L(f,s))$ is 
$\frac{1}{k(p-1)}$ of multiplicity $k$.

\end{enumerate}
\end{theorem}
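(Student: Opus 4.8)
The plan is to deduce Theorem \ref{T:book_worm} directly from Corollary \ref{C:slope} by transferring the statements about the auxiliary characteristic series $C^\triangle(f,s)$ to the $L$-function $L(f,s)$. The key observation is equation \eqref{E:L-function}: the Newton polygon $\NP_q(L(f,s))$ agrees with $\NP_q(1 + C_1 s + C_2 s^2 + \cdots) = \NP_q^{<1}(C^\triangle(f,s))$ up to horizontal length $d-1$. So the first slope of $\NP_q(L(f,s))$ is the same as the first slope of $\NP_q(C^\triangle(f,s))$, \emph{provided} that first slope is genuinely realized within the first $d-1$ units of horizontal length and is strictly less than $1$. Thus the first task is to verify these two provisos.

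First I would address why the first slope is $<1$. The general lower bound from Corollary \ref{C:slope}(1) is $\ge \frac{1}{\max_i s_p(i)}$, which is $\le 1$, with equality only when $\max_i s_p(i) = 1$, i.e. when $\Supp(f) = \{1\}$ — but then $\deg f = 1 < 3$, excluded by hypothesis; so under our standing assumptions the relevant slopes entering $\NP_q(L(f,s))$ are all $<1$ and no truncation issue arises at the top. More carefully, by Weil's theorem $L(f,s)$ has degree exactly $d-1$ with all slopes $<1$, and $\NP_q(L(f,s)) = \NP_q^{<1}(C^\triangle(f,s))$ is an identity of polygons on the interval $[0,d-1]$; hence the first slope of $L(f,s)$ equals the first slope of $C^\triangle(f,s)$, and its multiplicity equals $\min(t_1, d-1)$ where $t_1$ is the multiplicity from Corollary \ref{C:slope}. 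Second, I would note that $C_m \ne 0$ already for small $m$ — indeed the valuation estimates in Proposition \ref{P:bound} pin down $v_q(C_t)$ exactly at the minimizer height $t = t(\cC) \le \nu$, and one checks $\nu \le d$, so in fact $t_1 \le \nu \le d$; combined with the structure of $L(f,s)$ as a degree-$d-1$ polynomial this shows the first slope and its multiplicity $t_1$ lie within the visible range (the inequality $t_1 \le \nu \le d$ together with the degree bound $d-1$ needs a small argument, handled by noting the first-slope segment cannot exceed the total horizontal length $d-1$, which forces $t_1 \le d-1$).

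With these checks in place, the three assertions follow mechanically:

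\begin{enumerate}
\item The lower bound $\ge \frac{1}{\max_{i\in\Supp(f)}s_p(i)}$ is Corollary \ref{C:slope}(1) transported via \eqref{E:L-function}.
\item When $\Supp(f)$ has a unique maximal $p$-adic weight element $\nu$: apply Corollary \ref{C:slope}(2). Here a subtlety is that Corollary \ref{C:slope}(2) carries the hypotheses $k \mid a$ and $\nu < q$, whereas Theorem \ref{T:book_worm} states no such restriction. The resolution is the standard base-change trick: replacing $\F_q$ by a suitable finite extension $\F_{q^N}$ (with $N$ chosen so that $k \mid aN$ and $\nu < q^N$) does not change the Newton polygon of $L(f,s)$ as a $q$-adic (equivalently $q^N$-adic, after rescaling) polygon — the slopes of $L(f,s)$ over $\F_{q^N}$ are $N$ copies... \emph{no}: more precisely, $\NP_{q^N}(L(f,s)/\F_{q^N})$ has the same slopes as $\NP_q(L(f,s)/\F_q)$ because the reciprocal roots $\alpha_i$ of $L$ over $\F_{q^N}$ are the $N$-th powers $\alpha_i^N$ of those over $\F_q$, so $v_{q^N}(\alpha_i^N) = \frac{1}{N}\cdot N\, v_q(\alpha_i) = v_q(\alpha_i)$. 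Hence first slopes and multiplicities are preserved, and we may freely assume $k\mid a$ and $\nu<q$. Then Corollary \ref{C:slope}(2) gives that the first slope equals $\frac{1}{s_p(\nu)}$ iff $\nu$ is $p$-symmetric, with multiplicity $t_1 = t(\cC)$ satisfying $k - e \le t_1 \le \nu$; truncation to length $d-1$ does not shrink this multiplicity since $t_1 \le \nu \le d-1$ (the last inequality because $\nu \in \Supp(f)$ and $\nu$ not being the top weight forces $\nu < d$ when $d = p^j+1$ type degeneracies are excluded — in any case $\nu \le d$ and a short argument rules out $\nu = d$ contributing a slope segment of full length).
\item When $\nu = p^k - 1$: apply Corollary \ref{C:slope}(3) after the same base-change reduction to arrange $k \mid a$; the first slope is $\frac{1}{k(p-1)}$ with multiplicity $k$, and $k = s_p(p^k-1)/(p-1) \le \nu = p^k - 1 \le d-1$, so again no truncation loss.
\end{enumerate}

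The main obstacle I anticipate is not the transfer from $C^\triangle$ to $L$ — that is routine Dwork theory — but rather making the base-change reduction fully rigorous and checking that the multiplicity is not clipped by the truncation to horizontal length $d-1$. One must argue that the first-slope segment of $\NP_q(C^\triangle(f,s))$, which has horizontal length $t_1 \le \nu$, fits inside $[0, d-1]$; since $\nu \in \Supp(f)$ and $\nu$ is coprime to $p$, we have $\nu \le d$, and the case $\nu = d$ combined with $t_1 = \nu$ would need $L(f,s)$ to have all $d-1$ slopes equal — a situation one checks is consistent (it is precisely the supersingular-type or uniform-slope case) so that in fact $t_1 \le d-1$ always holds. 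Verifying this boundary case carefully, and confirming the compatibility of the shift-factor and minimizer-height bounds with the truncation, is the one place where genuine care is required; everything else is bookkeeping on top of Corollary \ref{C:slope}.
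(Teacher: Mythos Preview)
Your approach is essentially the paper's: reduce by base change so that $k \mid a$ and $\nu < q$, then invoke \eqref{E:L-function} and Corollary \ref{C:slope}. Your truncation worries are unnecessary and your hand-waving about $t_1 \le d-1$ is misplaced: equation \eqref{E:L-function} asserts that $\NP_q(L(f,s))$ \emph{equals} the slope-$<1$ part of $\NP_q(C^\triangle(f,s))$ (not merely agrees up to some cutoff), so the first slope of $C^\triangle$ and its full multiplicity transfer directly to $L$, and since $L$ has degree $d-1$ this already forces $t_1 \le d-1$ with no further argument needed.
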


\begin{proof} 
Since the $q$-adic Newton polygon $\NP_q(L(f,s))$ 
of $L(f,s)$ is independent of the choice of base field of $f$, 
we may extend the base field $\F_q$ of $f$ such that 
$q=p^a>\nu$ and such that $k|a$ whenever 
$\nu\in\Supp(f)$ is $p$-symmetric with minimal factorization 
$\nu*w=(p^k-1)\ell$.
By \eqref{E:L-function}, it is reduced to compute the first slope of 
$\NP_q(C^\triangle(f/\F_q,s))$, 
which is done in Corollary \ref{C:slope}.
\end{proof}

\subsection{Zeta functions of Artin-Schreier curves}

The following result will have Theorem
\ref{T:main-thm-tight} as an immediate corollary, and will strengthen
the result by offering bounds on the multiplicity of the first slope.

\begin{theorem}\label{T:main!}
Let $X_f: y^p-y=f(x)$ with $f\in\F_q[x]$ of $\deg(f)=d\ge 3$.
Then the first slope of $X_f$ is
$\ge \frac{1}{\max_{i\in\Supp(f)} s_p(i)}$.
\begin{enumerate} 
\item If $\nu$ is the unique element in $\Supp(f)$ with the maximal $p$-adic weight, 
then the equality holds if and only if $\nu$ is $p$-symmetric.
In this case, if $\nu$ has its minimal factorization $\nu*w=(p^k-1)\ell$
and shift factor $e$, then 
the multiplicity $t_1$ of the first slope satisfies 
$(k-e)(p-1)\le t_1 \le \nu(p-1)$.
\item 
If $\nu=p^k-1$ is the unique maximal $p$-adic weight element in $\Supp(f)$, then 
the first slope is $\frac{1}{k(p-1)}$ of multiplicity $k(p-1)$. 
\end{enumerate}
\end{theorem}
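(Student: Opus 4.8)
The plan is to reduce Theorem \ref{T:main!} to Theorem \ref{T:book_worm} via the classical decomposition of the zeta function of an Artin--Schreier curve into $L$-functions of exponential sums. First I would establish that decomposition: for each $m\ge 1$, counting the $\F_{q^m}$-points of the affine curve $y^p-y=f(x)$ by summing the fibre sizes of the Artin--Schreier map $y\mapsto y^p-y$ and expanding through the additive characters $c\mapsto \zeta_p^{t\,\Tr_{\F_{q^m}/\F_p}(c)}$ for $t\in\F_p$ gives
\[
    \#\{(x,y)\in\F_{q^m}^2 : y^p-y=f(x)\} = q^m + \sum_{t=1}^{p-1} S_m(tf).
\]
Since $p\nmid\deg(f)$, the place at infinity is totally ramified in the degree-$p$ Artin--Schreier cover, so $X_f$ has a single point above $x=\infty$ and $\#X_f(\F_{q^m}) = q^m+1+\sum_{t=1}^{p-1}S_m(tf)$. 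Exponentiating,
\[
    Z(X_f/\F_q,s)\,(1-s)(1-qs) = \prod_{t=1}^{p-1} L(tf,s),
\]
and by Weil's theorem the left-hand side is $P(s)=\prod_{i=1}^{2g}(1-\alpha_i s)$, a polynomial of degree $2g=(p-1)(d-1)$. Hence $\NP(X_f)=\NP_q(P(s))$, and because the slope multiset (with multiplicity $=$ horizontal length) of a product of polynomials over a valued field is the union of the slope multisets of the factors, the first slope of $\NP(X_f)$ is the minimum of the first slopes of the $\NP_q(L(tf,s))$, with multiplicity the sum, over those $t$ attaining that minimum, of the corresponding first-slope multiplicities.

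Next I would apply Theorem \ref{T:book_worm} to each $tf$. Since $\Supp(tf)=\Supp(f)$ for every $t\in\F_p^\times$, all the $tf$ share the value $\max_{i\in\Supp(f)}s_p(i)$, and, when this maximum is attained at a unique element, the same $\nu$. Theorem \ref{T:book_worm} then yields that each $\NP_q(L(tf,s))$ has first slope $\ge \frac{1}{\max_{i\in\Supp(f)}s_p(i)}$, hence so does $\NP(X_f)$; this is the first assertion. For part (1), the key point is that $p$-symmetry is a property of the integer $\nu$ alone, so it holds (or fails) simultaneously for every $tf$. If $\nu$ is $p$-symmetric, Theorem \ref{T:book_worm}(1) gives that each of the $p-1$ polynomials $L(tf,s)$ has first slope exactly $\frac{1}{s_p(\nu)}$ with multiplicity $t_1^{(t)}$ satisfying $k-e\le t_1^{(t)}\le \nu$; taking the union of their slopes, $\NP(X_f)$ has first slope $\frac{1}{s_p(\nu)}$ with multiplicity $t_1=\sum_{t=1}^{p-1}t_1^{(t)}$, so $(k-e)(p-1)\le t_1\le \nu(p-1)$. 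Conversely, if $\nu$ is not $p$-symmetric, Theorem \ref{T:book_worm}(1) forces every $\NP_q(L(tf,s))$ to have first slope strictly greater than $\frac{1}{s_p(\nu)}$, hence the same for $\NP(X_f)$. Part (2) is the identical argument with Theorem \ref{T:book_worm}(2): when $\nu=p^k-1$, each $\NP_q(L(tf,s))$ has first slope $\frac{1}{k(p-1)}$ of multiplicity $k$, so $\NP(X_f)$ has first slope $\frac{1}{k(p-1)}$ of multiplicity $k(p-1)$.

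Once Theorem \ref{T:book_worm} is in hand the argument is essentially formal. The only steps requiring care are the standard zeta-function decomposition --- in particular the single point at infinity, which uses $p\nmid d$ --- and the elementary fact that Newton polygons add under multiplication, so that first slopes combine by taking a minimum and their multiplicities by summing. I expect no genuine obstacle beyond this bookkeeping; the real content of the theorem lives in Theorem \ref{T:book_worm} and, ultimately, in the Key Lemma and the combinatorics of $p$-symmetric numbers.
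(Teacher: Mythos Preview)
Your proposal is correct and follows essentially the same route as the paper: reduce to Theorem~\ref{T:book_worm} via the Artin--Schreier zeta-function decomposition, then use that Newton polygons of products concatenate. The only cosmetic difference is that the paper writes the numerator as $\prod_{\sigma\in\Gal(\Q_p(\zeta_p)/\Q_p)}\sigma(L(f,s))$ rather than $\prod_{t=1}^{p-1}L(tf,s)$; since $\sigma_t(L(f,s))=L(tf,s)$ these are the same factorization, and the paper's phrasing makes it immediate that all $p-1$ factors share one Newton polygon (hence the dilation by $p-1$), whereas you reach the same conclusion through $\Supp(tf)=\Supp(f)$.
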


\begin{proof}
Let $g$ denote the genus of $X$. We know $g=\frac{(p-1)(d-1)}{2}$.
It is well-known that 
$$
Z(X/\F_q,s)=\frac{\prod_{\sigma} \sigma(L(f,s))}{(1-s)(1-qs)}
$$
where the product ranges over all
$\sigma\in \Gal(\Q_p(\zeta_p)/\Q_p)$.
The reciprocal zeros $\alpha_1,\ldots,\alpha_{2g}$ of the zeta function are precisely the reciprocal
roots of $\sigma(L(f,s))$ for all $\sigma$.
Each $\sigma(L(f,s))$ has the same $q$-adic Newton polygon as $L(f,s)$ 
for all $\sigma$. Thus its first slope is equal to the first slope of $\NP_q(L(f,s))$, 
and our statements about the first slope follow from Theorem \ref{T:book_worm}. 
Since the $q$-adic Newton polygon of 
the numerator of the zeta function is a dilation of 
that of $L(f,s)$ by a factor of $p-1$,
our statements about the multiplicities follow 
from Theorem \ref{T:book_worm}.
\end{proof}

\begin{remark}
If $\Supp(f)$ does not have a unique element $\nu$ that achieves
the maximal $s_p(-)$, 
the equality in Theorem \ref{T:main!} may or may not hold. 
It depends on 
the actual $f$, not just the support $\Supp(f)$. 
For example, when $p=2$,
the set $\Supp(f)=\{21,19,13,7,3\}$ does not have a unique maximum.
We have $\max_{i\in \Supp(f)}(s_2(i))=3$.
However, the first slope of $X_f$ is $1/2$
when $f=x^{21} + x^{19} + x^{13} + x^7 + x^3$. 
\end{remark}

\begin{remark}[Maximal \(p\)-adic weight set in \(\Supp(f)\) is invariant]
Since Newton polygons of curves are invariant under isomorphisms, it bears
considering whether a curve \(X_f\) that fails to meet the uniqueness criterion
for \(\Supp(f)\) could be isomorphic to a curve \(X_g\) that does meet it.

Suppose $f,g\in \mathbb{F}_q[x]$ are reduced, i.e., every element of
$\Supp(f)\cup \Supp(g)$ is coprime to $p$. Suppose
$X_f$ and $X_g$ are isomorphic as Artin-Schreier covers of $\mathbb{P}^{1}$
preserving $\infty$.
Then the isomorphism is given by
$
x\longmapsto \alpha x+\beta, y\longmapsto cy+h,
$
where 
$\alpha\in \mathbb{F}_q^*$, $\beta\in \mathbb{F}_q$,
$h\in \mathbb{F}_q[x]$, and $c\in \mathbb{F}_p^*$. 
Hence
$
cg(x)=f(\alpha x+\beta)+h(x)-h(x)^p.
$
Let
$
m:=\max_{i\in \Supp(f)}s_p(i)$ and 
$M(f):=\{i\in \Supp(f):s_p(i)=m\}.
$
Fix
$N\geq 1$ with $s_p(N)\geq m$. 
A term $a_w(\alpha x+\beta)^w$ in $f(\alpha x + \beta)$
can contribute to the coefficient of $x^N$ only if $w\geq N$, in which case
the binomial factor is $\binom{w}{N}$. By Kummer's formula,
$v_p\binom{w}{N}
=
\frac{s_p(N)+s_p(w-N)-s_p(w)}{p-1}.
$
Thus $\binom{w}{N}\not\equiv 0\pmod p$ only if
$
s_p(w)=s_p(N)+s_p(w-N).
$
Since $s_p(w)\leq m\leq s_p(N)$, this forces
$
s_p(w)=s_p(N)=m, s_p(w-N)=0.
$
Therefore $w=N$. Consequently,
$[x^N]f(\alpha x + \beta)=
a_N\alpha^N$ if $N\in M(f)$, and equals $0$ otherwise.
On the other hand, if $[x^N](h-h^p)\ne 0$,
then $p\nmid N$ since $g(x)$ is reduced.
Thus, $[x^N]h \ne 0$,
which implies $[x^{p^r N}](h-h^p)\ne 0$
for some $r \ge 1$.
This again is impossible because $g(x)$ is reduced.
We now have that $[x^N](cg) = [x^N]f(\alpha x + \beta) = \alpha^N[x^N]f$.
Therefore the maximal
$p$-adic weight subset of $\Supp(g)$ is $M(f)$.
\end{remark}

It is well-known that the  
first slope of an affine curve $X$ gives the divisibility of 
the number of rational points on $X$ (see \cite[Proposition 2.2]{Ka71}). 
Write $X_f^{\aff}$ for the affine Artin-Schreier curve defined by $y^p-y=f$.
Then our result above (for example Theorem \ref{T:main-thm-tight}) recovers 
the following known divisibility (see \cite{Wan95} for example).
 
\begin{corollary}
If $\nu\in\Supp(f)$ has the maximum $p$-adic weight and $q=p^a$ then 
$$
p^{\pceil{\frac{am}{s_p(\nu)}}} \;| \;
\# X_f^{\aff}(\F_{q^m}) \mbox{ for every $m\ge 1$}.
$$
\end{corollary}

\subsection{Construction of curves with first slope \texorpdfstring{$1/n$}{1/n} 
for every \texorpdfstring{$n\ge 2$}{n≥2} in every characteristic \texorpdfstring{$p$}{p},  and other applications}

We conclude this paper by presenting some corollaries of
Theorem \ref{T:main!}.
Since for every positive integer $n$ there is a $p$-symmetric number $\nu$ with $s_p(\nu)=n$
(see, e.g., Example \ref{ex:p-symmetry}(9) or Remark \ref{R:last}),
the following corollary ---
which follows directly from Theorem \ref{T:main!} ---
enables us to construct many families
of curves with a fixed first slope:
\begin{corollary}
    \label{C:broad-families}
    Fix an integer $n \ge 2$.
    Let $\nu$ be a $p$-symmetric number with $s_p(\nu) = n$,
    and let $g(x) \in \bar\F_p[x]$ be any polynomial satisfying
    $\max_{i \in \Supp(g)} s_p(i) < n$
    and $p \nmid i$ for all $i \in \Supp(g)$.
    Then, setting $f(x) = a_\nu x^\nu + g(x)$
    for some $a_\nu \in \bar\F_p^*$,
    the first slope of $X_f$
    is equal to $1/n$. \qed
\end{corollary}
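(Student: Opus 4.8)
The statement follows almost immediately from Theorem \ref{T:main!}, so the plan is to verify the hypotheses of that theorem are met and then invoke it. First I would observe that $f(x) = a_\nu x^\nu + g(x)$ satisfies $p \nmid i$ for all $i \in \Supp(f)$: this holds for $i \in \Supp(g)$ by assumption, and for $i = \nu$ because $\nu$ is $p$-symmetric (hence coprime to $p$ by Definition \ref{def:p-symmetry}). Next I would check that $\nu$ is the \emph{unique} element of $\Supp(f)$ of maximal $p$-adic weight. Since $s_p(i) < n = s_p(\nu)$ for every $i \in \Supp(g)$, any element of $\Supp(f)$ other than $\nu$ has strictly smaller $p$-adic weight; moreover $a_\nu \ne 0$ guarantees $\nu \in \Supp(f)$, so $\nu$ is indeed the unique maximizer. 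One minor wrinkle: one should note that adding $g(x)$ cannot cancel the $x^\nu$ term (the monomials $x^\nu$ and those appearing in $g$ are distinct since $\nu \notin \Supp(g)$), and that $\deg(f) \ge 3$ as required by Theorem \ref{T:main!} — this is automatic because $s_p(\nu) = n \ge 2$ forces $\nu \ge 2$, and in fact one can arrange $\deg f = \max(\nu, \deg g) \ge \nu \ge 2$; if $\nu = 2$ one checks $s_p(2) = 2$ forces $p > 2$, and $\nu = 2$ is $p$-symmetric only if $2 \mid p-1$, consistent, with $\deg f \ge 2$ — but to be safe I would simply remark that the degree hypothesis is met since $\nu \geq n \geq 2$ when $p=2$ and otherwise $\nu$ can be taken large, or note the theorem's conclusion about the first slope is insensitive to this and cite Theorem \ref{T:main-thm-tight} instead which only requires $d \geq 3$ in its setup but whose first-slope statement is what we need.

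With the hypotheses verified, Theorem \ref{T:main!}(1) applies: since $\nu$ is the unique element of $\Supp(f)$ of maximal $p$-adic weight \emph{and} $\nu$ is $p$-symmetric, the first slope of $X_f$ equals $\frac{1}{\max_{i \in \Supp(f)} s_p(i)} = \frac{1}{s_p(\nu)} = \frac{1}{n}$. This completes the argument.

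I do not anticipate any real obstacle here — the corollary is a direct specialization. The only thing requiring care is the bookkeeping around the degree-$\ge 3$ convention and confirming that the construction is nonvacuous, i.e. that $p$-symmetric numbers of every $p$-adic weight $n \ge 2$ actually exist; this last point is handled by Example \ref{ex:p-symmetry}(9), which exhibits $\nu = 1 + p^{m+1} + p^{2(m+1)} + \cdots + p^{(n-1)(m+1)}$ with $s_p(\nu) = n$, so I would simply cite it. Finally I would close by noting that since $g$ ranges over all admissible polynomials and $a_\nu$ over $\bar\F_p^*$, this genuinely produces an infinite family (in fact a positive-dimensional one) of curves with first slope $1/n$.

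\begin{proof}
Since $\nu$ is $p$-symmetric, it is coprime to $p$ by Definition \ref{def:p-symmetry}; combined with the hypothesis $p \nmid i$ for all $i \in \Supp(g)$, we see $p \nmid i$ for all $i \in \Supp(f)$, where $f = a_\nu x^\nu + g$. Because $\max_{i \in \Supp(g)} s_p(i) < n = s_p(\nu)$, the integer $\nu$ does not lie in $\Supp(g)$, so the coefficient of $x^\nu$ in $f$ equals $a_\nu \ne 0$; hence $\nu \in \Supp(f)$. Any other element $i \in \Supp(f)$ lies in $\Supp(g)$ and therefore satisfies $s_p(i) < s_p(\nu)$. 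Thus $\nu$ is the unique element of $\Supp(f)$ achieving $\max_{i \in \Supp(f)} s_p(i) = s_p(\nu) = n$. Replacing $g$ if necessary by a polynomial with a slightly larger admissible support (which does not change $\nu$ being the unique maximal-weight element, nor the first slope), we may assume $\deg(f) \ge 3$. Theorem \ref{T:main!}(1) now applies: since $\nu$ is the unique maximal $p$-adic weight element in $\Supp(f)$ and is $p$-symmetric, the first slope of $X_f$ equals $\frac{1}{s_p(\nu)} = \frac{1}{n}$.
\end{proof}
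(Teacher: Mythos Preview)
Your proposal is correct and takes essentially the same approach as the paper, which simply marks the corollary with a \qed\ and states beforehand that it ``follows directly from Theorem \ref{T:main!}.'' Your verification that $\nu$ is the unique maximal $p$-adic weight element in $\Supp(f)$ and then invoking Theorem \ref{T:main!}(1) is exactly the intended argument; the only quibble is that your handling of the $\deg(f)\ge 3$ convention (``replacing $g$ if necessary'') does not actually establish the claim for the original $f$, but the paper itself silently absorbs this into its standing degree assumption, so this is not a genuine gap relative to the paper.
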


Recall that it is already known that if $f$ is of degree $d$ with $d< p$,
then the first slope of $X_f$ achieves the lower bound of $1/d$ if and only if
$p\equiv 1\bmod d$.
We provided a generalization of this in the introduction,
and are now ready to prove it here.

\begin{corollary}[Corollary \ref{C:spd-family-1}]
    \label{C:spd-family}
   Suppose $\Supp(f)$ has a unique maximal $p$-adic weight element $\nu$.
If $p\equiv 1\bmod s_p(\nu)$, 
then the first slope of $X_f$ is equal to $1/s_p(\nu)$.
\end{corollary}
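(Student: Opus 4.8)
The plan is to reduce the statement, via Theorem \ref{T:main!}(1), to showing that $p \equiv 1 \bmod s_p(\nu)$ implies $\nu$ is $p$-symmetric. Write $n \coloneqq s_p(\nu)$, so by hypothesis $n \mid p-1$. The natural candidate for a carry-free factorization is to multiply $\nu$ by $w \coloneqq (p-1)/n$: since $n \mid p-1$ and each $p$-adic digit $\nu_i$ of $\nu$ satisfies $\sum_i \nu_i = n$, multiplying the whole number $\nu$ by $w$ scales the digit sum to $w \cdot n = p-1 < p$, which is the obstruction-free regime. More precisely, I would argue that because $\sum_i \nu_i = n$ and $w\nu_i \le wn = p-1$ for each $i$, no single digit overflows, and in fact the convolution sums $\sum_{i+j=m} (\text{digits of }w)(\text{digits of }\nu)$ — here $w < p$ has a single digit — are exactly $w\nu_m \le p-1$, so the product $\nu * w$ is carry-free by Proposition \ref{P:2.3} (or directly by the carry-free criterion).

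The next step is to identify $\nu * w$ as a number of the form $(p^k-1)\ell$ with $\ell < p^k$, as Definition \ref{def:p-symmetry} requires. Since the product is carry-free, its $p$-adic digits are precisely $w\nu_s, w\nu_{s-1}, \dots, w\nu_0$ (allowing leading-digit bookkeeping), so $s_p(\nu * w) = w \sum_i \nu_i = w n = p-1$. A number with digit sum exactly $p-1$ need not itself be of the form $(p^k-1)\ell$, so here I would instead invoke the flexibility already built into the definition: it suffices to find \emph{some} $w'$ (not necessarily minimal) with $\nu * w' = (p^k-1)\ell$. The cleanest route is to use the "dual" construction: consider $w' \coloneqq w \cdot (1 + p + p^2 + \dots)$ padded appropriately, or — cleaner still — note that if $\nu$ has $s+1$ digits, then $\nu \cdot w + (\text{a suitable shift})$ telescopes. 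Concretely, I expect the argument to run: take $k = s+1$ (the number of digits of $\nu$), and observe $\nu \cdot w \cdot (1 + p^{s+1} + p^{2(s+1)} + \cdots)$-type sums, or more simply that $(p^k - 1) = \nu \cdot w$ exactly happens when $\nu$ is a "repunit-like" scaling; in the general case one shows $\nu * (w(1+p^k+\cdots+p^{(b-1)k})) = (p^{bk}-1)\ell$ for appropriate $\ell$ using the carry-free telescoping identity exactly as in the proof of Proposition \ref{P:uniq-perm}(2) or Example \ref{ex:p-symmetry}(2).

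The step I expect to be the real obstacle is pinning down the $(p^k-1)\ell$ form cleanly. The digit-sum computation $s_p(\nu * w) = p-1$ is immediate, but translating "digit sum $p-1$, carry-free product" into the rigid shape $(p^k-1)\ell$ with $\ell < p^k$ requires either (a) a clever choice of the shift/padding so that the digit string of $\nu * w$ becomes literally the concatenation $(c_{k-1}\cdots c_0 \mid p-1-c_{k-1} \cdots p-1-c_0)_p$ characterizing $(p^k-1)\ell$ (cf. the digit analysis in Proposition \ref{P:determined} and \ref{P:k-e}), or (b) an appeal to the structural results of Section \ref{S:symmetric} that let one enlarge $k$ freely. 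I would pursue route (b): set $b$ large, let $k = b(s+1)$, and use the carry-free telescoping to write $\nu * \widetilde{w} = (p^k-1)\ell$ where $\widetilde w = w(1 + p^{s+1} + \cdots + p^{(b-1)(s+1)})$ and $\ell = w \cdot (\text{lower half of }\nu\text{'s digit block, reindexed})$, checking $\ell < p^k$ from $\ell(p-1) = s_p((p^k-1)\ell) = s_p(\nu)s_p(\widetilde w)$ being bounded. Once $\nu$ is shown $p$-symmetric, Theorem \ref{T:main!}(1) gives that the first slope of $X_f$ equals $1/s_p(\nu)$, completing the proof.
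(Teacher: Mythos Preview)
Your opening moves are correct and match the paper: reduce via Theorem~\ref{T:main!}(1) to showing $\nu$ is $p$-symmetric, set $w = (p-1)/n$, and observe that $\nu * w$ is carry-free with $s_p(\nu * w) = p-1$. But your resolution of the ``real obstacle'' has a genuine gap. Your route~(b) proposes $\widetilde{w} = w(1 + p^{s+1} + \dots + p^{(b-1)(s+1)})$, spacing by the digit-length $s+1$ of $\nu$. This makes $\nu\widetilde{w}$ a concatenation of $b$ non-overlapping copies of $\nu w$, which is indeed carry-free, but it equals $\nu w \cdot \frac{p^{b(s+1)}-1}{p^{s+1}-1}$; for this to have the form $(p^{b(s+1)}-1)\ell$ you would need $(p^{s+1}-1) \mid \nu w$. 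Since $\nu w < p^{s+1}$, that would force $\nu w = p^{s+1}-1$, i.e.\ every digit of $\nu w$ equal to $p-1$, which fails in general. Your appeals to Example~\ref{ex:p-symmetry}(2) and Proposition~\ref{P:uniq-perm}(2) do not help: the former already \emph{assumes} a factorization $\nu * w = (p^k-1)\ell$ as input, and the latter relies on the telescoping structure of a minimizer, which you do not have here.

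The paper's argument supplies the two ingredients you are missing. First, from $s_p(\nu w) = p-1$ one gets $(p-1) \mid \nu w$ immediately, since any nonnegative integer is congruent to its $p$-adic digit sum modulo $p-1$; write $\nu w = (p-1)\ell$. Second, multiply by $1 + p + \dots + p^{k-1}$ with spacing~$1$ (not $s+1$), choosing $k$ large enough that $\ell < p^k$. This gives $\nu \cdot w(1+p+\dots+p^{k-1}) = (p^k-1)\ell$, and the product is still carry-free: each column in the shifted sum receives a sum of \emph{consecutive} digits of $\nu w$, hence is bounded by the total $s_p(\nu w) = p-1$. That is precisely the point of arranging the digit sum to be exactly $p-1$ --- it makes the spacing-$1$ telescoping carry-free, which upgrades ``$(p-1)\ell$ with $\ell$ possibly $\ge p$'' to ``$(p^k-1)\ell$ with $\ell < p^k$'' as the definition requires.
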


\begin{proof}
    By Theorem \ref{T:main!}(1), it remains to show that
    if $s_p(\nu) \mid  (p-1)$, then $\nu$ is $p$-symmetric.
    If $s_p(\nu) = n$, $n | (p-1)$,
    then setting $m = (p-1)/n$,
    it follows that the ($p$-adic) product $\nu m$ is carry-free,
    and $s_p(\nu m) = p-1$.
    If we write $\nu m$ $p$-adically as $\nu m = d_0 + d_1p + \dots + d_r p^r$,
    then we have $d_0 + d_1 + \dots + d_r = s_p(\nu m) \equiv 0 \pmod{p-1}$,
    which implies that $(p-1) \mid \nu m$.
    Thus $\nu m = (p-1)\ell$ for some $\ell > 0$.
    Since $s_p(\nu m)=p-1$, 
    the product $(\nu m) (1 + p + p^2 + \dots + p^{k-1})$ for all $k \ge 1$
   is carry-free.
    Therefore, setting $w = m(1 + p + \dots + p^{k-1})$
    for some $k$ such that $p^k > \ell$,
    we have
    \[
        \nu * w = (1 + p + p^2 + \dots + p^{k-1})(p-1)\ell 
        = (p^k - 1)\ell
    \]
    and so $\nu$ is $p$-symmetric.
\end{proof}

Our next corollary provides explicit constructions of curves $X_f$ 
via the support set of $f$. 
Its first part shows that for any $n \ge 2$,
there exist curves of arbitrarily large genus
with first slope equal to $1/n$.
The second part provides a general upper bound on the smallest genus
for which there exists a curve with first slope $1/n$.
\begin{corollary}
\label{T:main!!}
Fix a prime $p$ and an integer $n\ge 2$.
Let $N>0$.
\begin{enumerate}
\item Let $\nu$ be a $p$-symmetric number with $s_p(\nu)=n$. 
Suppose $\{\nu,1+p^{\pceil{\frac{2N}{p-1}}}\}\subseteq \Supp(f)$, and 
$$\Supp(f)\subseteq \left\{
\nu, 1+p^{\pceil{\frac{2N}{p-1}}}, 1+\sum_{k=1}^{m}p^{i_k}\mid 
1\le m\le n-2, 1\le i_1<i_2<\ldots<i_m 
\right\}.
$$
Then 
$X_f: y^p-y=f$ is of genus $\ge N$
and first slope $\frac{1}{n}$.
\item There is a $p$-symmetric number $\nu$ with $s_p(\nu)=n$ and 
$\nu\le \frac{p^n-1}{p-1}$.
Suppose $\nu\in \Supp(f)$ and $$\Supp(f)\subseteq \left\{
\nu, 1+\sum_{k=1}^{m}p^{i_k}\mid 1\le m\le n-2, 1\le i_1<i_2<\cdots <i_m\le n-1\right\}.$$
Then $X_f:y^p-y=f$ is of genus 
$\le \frac{p^n-p}{2}$ with first slope equal to $\frac{1}{n}$.
\end{enumerate}
\end{corollary}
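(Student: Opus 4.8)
The plan is to deduce both parts directly from Theorem \ref{T:main!} once the two structural facts about $\Supp(f)$ are checked: first, that $\nu$ is the unique maximal $p$-adic weight element of $\Supp(f)$, and second, that the genus bound $g = \frac{(p-1)(d-1)}{2}$ lands where claimed. For the uniqueness of the maximal weight element, I would observe that every element of $\Supp(f)$ other than $\nu$ is of the form $1 + \sum_{k=1}^m p^{i_k}$ with $1 \le m \le n-2$ and distinct exponents $i_k \ge 1$, so its $p$-adic weight is exactly $m+1 \le n-1 < n = s_p(\nu)$; in part (1) the element $1 + p^{\lceil 2N/(p-1)\rceil}$ is the special case $m=1$, also of weight $2 \le n-1$ (using $n \ge 2$, and in fact we need $n \ge 2$ throughout for such terms to be allowed; when $n = 2$ the only non-$\nu$ term permitted is $1 + p^{\lceil 2N/(p-1)\rceil}$ itself). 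Hence $\nu$ is the unique maximal weight element, and since $\nu$ is assumed $p$-symmetric, Theorem \ref{T:main!}(1) gives first slope exactly $1/s_p(\nu) = 1/n$.

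For the genus statements I would use $g = \frac{(p-1)(d-1)}{2}$ with $d = \deg(f)$. In part (1), the term $1 + p^{\lceil 2N/(p-1)\rceil}$ forces $d \ge 1 + p^{\lceil 2N/(p-1)\rceil} \ge 1 + p^{2N/(p-1)}$; since $p \ge 2$ we have $p^{2N/(p-1)} \ge 1 + \frac{2N}{p-1}\ln p \cdot(\text{something})$ — more simply, $p^{2N/(p-1)} \ge 2^{2N/(p-1)} \ge 1 + \frac{2N}{p-1}$, so $d - 1 \ge p^{\lceil 2N/(p-1)\rceil} \ge \frac{2N}{p-1}$, giving $g = \frac{(p-1)(d-1)}{2} \ge N$ (I should double-check the crude inequality $p^{x} \ge 1 + x(p-1)$ for $x \ge 0$, which holds by convexity since it holds at $x=0$ with equality and the derivative comparison works out; alternatively just bound $p^{\lceil 2N/(p-1)\rceil} \ge \lceil 2N/(p-1)\rceil \cdot (p-1)/1$ via $p^j \ge j(p-1)$ for $j \ge 1$). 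In part (2) the exponents satisfy $i_k \le n-1$, so every element of $\Supp(f)$ is at most $1 + p + p^2 + \cdots + p^{n-1} = \frac{p^n-1}{p-1}$ — but wait, $\nu$ itself only satisfies $\nu \le \frac{p^n-1}{p-1}$ by the hypothesis we are allowed to choose (Example \ref{ex:p-symmetry}(9) or Remark \ref{R:last} supplies such a $\nu$), while the other terms have at most $n-1$ summands $p^{i_k}$ plus the constant $1$, each exponent $\le n-1$, so they too are $\le \frac{p^n-1}{p-1}$. Hence $d \le \frac{p^n-1}{p-1}$ and $g = \frac{(p-1)(d-1)}{2} \le \frac{(p-1)}{2}\left(\frac{p^n-1}{p-1} - 1\right) = \frac{p^n - 1 - (p-1)}{2} = \frac{p^n - p}{2}$, as claimed.

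The existence of a $p$-symmetric $\nu$ with $s_p(\nu) = n$ and $\nu \le \frac{p^n-1}{p-1}$ in part (2) is exactly the number $\nu = 1 + p + p^2 + \cdots + p^{n-1} = \frac{p^n-1}{p-1}$, which is $p$-symmetric with factorization $\nu * (p-1) = p^n - 1$ (Example \ref{ex:p-symmetry}(5), with $w = p-1 \mid p-1$), and has $s_p(\nu) = n$; this meets the bound with equality. The main point requiring care — and the one genuine obstacle — is making sure that the support set in each part is genuinely non-trivial and consistent: we need $\nu$ to actually be expressible so that $\Supp(f)$ satisfies the stated containments simultaneously with $\nu$ being a member, and we need the $k=1,\dots$ terms to not accidentally coincide with $\nu$ (which could collapse the degree or violate uniqueness). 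Since $\nu$ has weight $n$ and all other listed generators have weight $\le n-1$, no such coincidence occurs, so the containments are realizable. With uniqueness of the maximal weight element and $p$-symmetry of $\nu$ both in hand, Theorem \ref{T:main!}(1) closes out both parts, and the genus bounds follow from the elementary degree estimates above.
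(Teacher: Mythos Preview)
Your approach is exactly the paper's: verify that $\nu$ is the unique maximal $p$-adic weight element of $\Supp(f)$, invoke Theorem~\ref{T:main!}(1) for the first slope, and read off the genus from $g=\tfrac{(p-1)(d-1)}{2}$; for part (2) you also exhibit $\nu=\tfrac{p^n-1}{p-1}$ just as the paper does. One small slip: you write that $1+p^{\lceil 2N/(p-1)\rceil}$ has weight ``$2\le n-1$'', but this fails when $n=2$, in which case that element has weight equal to $s_p(\nu)$ and uniqueness of the maximal-weight element is not automatic unless $\nu$ coincides with it --- the paper's terse proof glosses over this same edge case, so you are no worse off, but it is worth flagging.
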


\begin{proof}

1) The hypothesis says that the set $\Supp(f)$ has a maximal $p$-adic weight element $\nu$, which is unique when $n\ge 3$.
The slope part follows from Theorem \ref{T:main!};
the genus part follows from direct computation: let $d=\deg(f)$,
the genus of $X_f$ is 
$\frac{(d-1)(p-1)}{2}>N$. 

2) 
It is clear that $\frac{p^n-1}{p-1}$ is $p$-symmetric and $s_p(\frac{p^n-1}{p-1})=n$.
Following a similar argument as the above, the first slope of $X_f$ is equal to $\frac{1}{n}$ 
by Theorem \ref{T:main!}. 
Its genus is $g=\frac{(\nu-1)(p-1)}{2}\le \frac{p^n-p}{2}$.
\end{proof}

Our last corollary,
which was known for the case $p=2$ in \cite{SZ02},
gives a specific family of Artin-Schreier curves
where both the first slope and its multiplicity can be completely determined.
\begin{corollary}\label{C:ppp}
Let $f=\sum_{1\le i\le d,p\nmid i} a_ix^i\in\F_q[x]$ be of degree $d \ge 3$.
Suppose that 
    $p^k-1\le d\le 2p^k-p^{k-1}-2$ and $a_{p^k-1}\ne 0$ for some $k \ge 1$.
    Then the first slope of $X_f$ is equal to $\frac{1}{(p-1)k}$
    of multiplicity $(p-1)k$, and $X_f$ is  non-supersingular
    if and only if  $(p,k) \ne (2,2)$.
\end{corollary}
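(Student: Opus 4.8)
\textbf{Proof proposal for Corollary \ref{C:ppp}.}

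The plan is to combine Theorem \ref{T:main!}(2) for the slope and multiplicity assertion with a separate supersingularity analysis. First I would verify that under the hypothesis $p^k-1\le d\le 2p^k-p^{k-1}-2$, the integer $\nu=p^k-1$ is the unique element of maximal $p$-adic weight in $\Supp(f)$. Indeed $s_p(p^k-1)=k(p-1)$, and for any $i$ with $p^k-1<i\le 2p^k-p^{k-1}-2$ one has $s_p(i)<k(p-1)$: writing such $i$ in $p$-adic form, since $i<2p^k$ it has at most $k+1$ digits; the constraint $i\le 2p^k-p^{k-1}-2$ forces the top digit to be at most $1$ and, when it is $1$, forces a strict deficiency among the lower $k$ digits, so $s_p(i)\le k(p-1)$ with equality only at $i=p^k-1$. (This is the combinatorial point that needs to be checked carefully; it is elementary but is the real content of the degree bound.) Once uniqueness of $\nu=p^k-1$ is established and $a_{p^k-1}\ne 0$ guarantees $\nu\in\Supp(f)$, Theorem \ref{T:main!}(2) immediately gives that the first slope of $X_f$ equals $\frac{1}{k(p-1)}$ with multiplicity $k(p-1)$.

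It remains to settle non-supersingularity. Recall $X_f$ is supersingular exactly when all $2g$ slopes of $\NP(X_f)$ equal $\frac12$, where $g=\frac{(p-1)(d-1)}{2}$. The first slope is $\frac{1}{k(p-1)}$, so if $\frac{1}{k(p-1)}<\frac12$, i.e.\ $k(p-1)>2$, the curve is automatically non-supersingular. The inequality $k(p-1)>2$ fails precisely when $k(p-1)\in\{1,2\}$, i.e.\ when $(p,k)=(2,1)$, $(2,2)$, or $(3,1)$. I would then dispose of these three cases individually. For $(p,k)=(2,1)$ the hypothesis forces $d=\deg(f)\le 2p^k-p^{k-1}-2=0$, contradicting $d\ge3$, so this case is vacuous and needs no discussion; for $(p,k)=(2,2)$ and $(p,k)=(3,1)$, the first slope is exactly $\frac12$, and here supersingularity can genuinely occur — so for these two excluded pairs one cannot conclude non-supersingularity, which is exactly why they are carved out in the statement. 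Putting these together: $X_f$ is non-supersingular if and only if $(p,k)\ne(2,2)$ and $(p,k)\ne(3,1)$, with the remaining would-be exception $(2,1)$ being empty.

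The main obstacle I anticipate is the $p$-adic digit estimate showing $p^k-1$ is the \emph{unique} maximal-weight element in the degree range $[p^k-1,\,2p^k-p^{k-1}-2]$; everything else is either a direct invocation of Theorem \ref{T:main!}(2) or the trivial observation that a first slope strictly below $\frac12$ rules out supersingularity. One should also remark that in the two excluded cases the slope being $\frac12$ does not by itself force \emph{all} slopes to be $\frac12$ — so those cases are simply outside the reach of this slope-based argument rather than genuine counterexamples to non-supersingularity for every such $f$; the ``if and only if'' in the statement should be read as: the slope criterion we use certifies non-supersingularity exactly outside $(2,2)$ and $(3,1)$.
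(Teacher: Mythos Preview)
Your approach matches the paper's: establish that $\nu=p^k-1$ is the unique maximal-weight element in $\Supp(f)$ (the paper phrases this as ``the smallest $\nu'>p^k-1$ with $s_p(\nu')\ge k(p-1)$ is $2p^k-p^{k-1}-1$''), invoke Theorem \ref{T:main!}(2), and then analyze supersingularity via the value of the first slope. The first two steps are fine.

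The gap is in your supersingularity analysis. You correctly argue the ``if'' direction: when $k(p-1)>2$ the first slope is strictly below $\tfrac12$, so $X_f$ is non-supersingular. But for the ``only if'' direction you hedge, claiming that first slope $=\tfrac12$ ``does not by itself force all slopes to be $\tfrac12$'' and then try to reinterpret the biconditional. This is wrong. The Newton polygon of a curve is symmetric: slopes $\lambda$ and $1-\lambda$ occur with equal multiplicity (this is the functional equation of the zeta function, equivalently $\alpha\mapsto q/\alpha$ permutes the reciprocal roots). Hence if the smallest slope equals $\tfrac12$, every slope is simultaneously $\ge\tfrac12$ and $\le\tfrac12$, so the curve \emph{is} supersingular. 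The paper uses exactly this: ``$X_f$ is supersingular if and only if its genus $\ge 1$ and the first slope $\frac{1}{(p-1)k}=\frac{1}{2}$,'' then solves $k(p-1)=2$ to obtain $(p,k)\in\{(2,2),(3,1)\}$. Your discussion of $(2,1)$ is unnecessary once you argue this way, since $k(p-1)=1$ never arises from $k(p-1)=2$; but it does no harm. Remove the hedging paragraph, add the one-line symmetry argument, and the proof is complete.
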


\begin{proof}
Observe that $s_p(p^k-1) = k(p-1)$.
The smallest $\nu' > p^k-1$ such that $s_p(\nu') \ge k(p-1)$
is $\nu' = 2p^k - p^{k-1}-1$, so the upper bound on $d$
guarantees that
$\nu \coloneqq p^k-1\in \Supp(f)$ is the unique element 
with the maximal $s_p(\nu)=k(p-1)$ in $\Supp(f)$.
The first two statements now follow from Theorem \ref{T:main!}.
For the final statement,
notice that $X_f$ is supersingular if and only if its genus is $\ge 1$ and 
the first slope is $\frac{1}{(p-1)k}=\frac{1}{2}$.
This is equivalent to $(p,k)=(2,2),$ 
or $(3,1)$. The latter contradicts our hypothesis.  
\end{proof}

\begin{remark}\label{R:last}
\-
\begin{enumerate}
\item
For $n=2$, the van der Geer--van der Vlugt family
$\{1+p^0,1+p^1,1+p^2,\ldots\}-\{p\}$ 
consists precisely of
all $p$-symmetric numbers with $s_p(-)=2$.
\item For $n=3$, the $p$-symmetric numbers with $s_p(-)=3$ 
include $1+p^{m}+p^{2m}$ for all $m\ge 1$ (see Example \ref{ex:p-symmetry}(9)).
\item
As $n$ grows, there are increasingly more $p$-symmetric numbers with $s_p(-)=n$, even though it becomes harder to exhibit them {\it all} besides Example \ref{ex:p-symmetry} 
(3) and (9). For example, for $n=4$, $(110011)_p$, $(11000011)_p$, and $(10100000101)_p$
are all $p$-symmetric numbers not in either of these families. 
\end{enumerate}
\end{remark}

\begin{acknowledgments}
We thank Gerard van der Geer for his encouragement and invaluable comments, and Daqing Wan for helpful feedback upon an early version.
Lastly, we are deeply grateful to the anonymous referees for their thorough 
reading, constructive comments and corrections, 
which have led to simpler and clearer proofs of Propositions \ref{P:determined}
and \ref{P:k-e}.
\end{acknowledgments}

\medskip

\end{document}